\theoremstyle{plain}
\newtheorem{thm}{Theorem}[section]
\newtheorem{prop}[thm]{Proposition}
\newtheorem{lemm}[thm]{Lemma}
\newtheorem{cor}[thm]{Corollary}
\theoremstyle{definition}
\newtheorem{con}[thm]{Conjecture}
\def\Z{{\mathbb Z}}
\def\cd{\protect\operatorname{cd}}
\def\cat{\protect\operatorname{cat}}
\def\nil{\protect\operatorname{nil}}
\def\Coind{\protect\operatorname{Coind}}
\title{On the LS-category of homomorphism of almost nilpotent groups}
\author{Nursultan Kuanyshov}
\address{Nursultan Kuanyshov, Department of Mathematics, University of Florida, 358 Little Hall, Gainesville, FL 32611-8105, USA}
\email{nkuanyshov@ufl.edu}
\subjclass[2020]{Primary 55M30, 20J06; Secondary 55M10, 22E20, 22E40}
\keywords{Lusternik-Schnirelmann category, group homomorphism}
\begin{document}
\maketitle
\begin{abstract}
We prove the equality $\cat(\phi)=\cd(\phi)$ for epimorphisms $\phi:\Gamma\to \Lambda$ between torsion-free, finitely generated almost nilpotent groups $\Gamma$ and $\Lambda$. 

In addition, we prove the equality $\cat(\phi)=\cd(\phi)$ for homomorphisms $\phi:\Gamma\to \Lambda$ between torsion-free, finitely generated virtually nilpotent groups. 
\end{abstract}

\section{Introduction}

 The {\em LS-category, denoted as $\cat(X)$, of a topological space $X$} is defined as the minimum number $k$ such that $X$ admits an open cover ${U_0, U_1, \dots, U_k}$ by $k+1$ contractible sets in X. This concept provides a lower bound on the number of critical points for smooth real-valued functions on closed manifolds \cite{LS}, \cite{CLOT}.

Since LS-category is a homotopy invariant, it can be extended to discrete groups $\Gamma$ as $\cat(\Gamma) = \cat(B\Gamma)$, where $B\Gamma$ is a classifying space. Computing the LS-category of spaces, even when considering nice spaces such as manifolds, presents significant challenges \cite{DS}. In the case of groups in the 1950s, Eilenberg and Ganea \cite{EG} established the equality between LS-category and cohomological dimension, $\cat(\Gamma) = \cd(\Gamma)$, for discrete groups $\Gamma$.

We recall the {\em cohomological dimension $\cd(\Gamma)$ of a group $\Gamma$} is defined as the supremum of $k$ such that $H^k(\Gamma, M) \neq 0$, where $M$ is a $\Z\Gamma$-module \cite{Br}. Dranishnikov and Rudyak \cite{DR} showed that $\cd(\Gamma) = \max\{k \mid (\beta_\Gamma)^k \neq 0\}$, where $\beta_\Gamma \in H^1(\Gamma, I(\Gamma))$ is the Berstein-Schwarz class of $\Gamma$ \cite{Be}.

For a map $f: X \to Y$, {\em the LS-category of the map, denoted as $\cat(f)$,} is the minimum number $k$ such that $X$ can be covered by $k+1$ open sets $U_0, U_1, \dots, U_k$ with nullhomotopic restrictions $f|_{U_i}: U_i \to Y$ for all $i$. We define the LS-category $\cat(\phi)$ for a group homomorphism $\phi: \Gamma \to \Lambda$ as $\cat(f)$, where the map $f: B\Gamma \to B\Lambda$ induces the homomorphism $\phi$ on fundamental groups.

Mark Grant \cite{Gr} introduced the cohomological dimension $\cd(\phi)$ of a group homomorphism $\phi: \Gamma \to \Lambda$ as the maximum of $k$ such that there exists a $\Z\Lambda$-module $M$ with a non-zero induced homomorphism $\phi^: H^k(\Lambda, M) \to H^k(\Gamma, M)$. In light of the universality of the Berstein-Schwarz class \cite{DR}, we have $\cd(\phi) = \max\{k \mid \phi(\beta_\Lambda)^k \neq 0\}$. Combining this with the cup-length lower bound for LS-category, we obtain the inequality $\cd(\phi) \leq \cat(\phi)$ for all group homomorphisms.

Considering the Eilenberg-Ganea equality $\cd(\Gamma) = \cat(\Gamma)$, a natural conjecture arises:
\begin{con}\label{con}
For any group homomorphism $\phi:\Gamma\to\Lambda$, it is always the case that $\cat(\phi) = \cd(\phi)$.
\end{con}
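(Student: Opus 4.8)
The plan is to establish only the nontrivial inequality $\cat(\phi)\le\cd(\phi)$, since the cup-length bound recalled above already gives $\cd(\phi)\le\cat(\phi)$ for every homomorphism. I would realize $\phi$ by $f=B\phi\colon B\Gamma\to B\Lambda$, choosing a CW model of $B\Gamma$ so that obstruction theory is available on skeleta. The first step is to recast $\cat(\phi)$ as a sectional category: a restriction $f|_U$ is nullhomotopic exactly when it lifts through the based path fibration $PB\Lambda\to B\Lambda$, i.e.\ when the pullback fibration $q\colon\mathcal E=f^{\ast}PB\Lambda\to B\Gamma$ admits a section over $U$. Hence $\cat(\phi)=\operatorname{secat}(q)$. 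Because $B\Lambda$ is aspherical, the fiber $\Omega B\Lambda$ of $q$ is homotopy equivalent to the discrete set $\Lambda$, so $q$ is precisely the $\Lambda$-covering $\widetilde{B\Gamma}\times_\Gamma\Lambda\to B\Gamma$ associated to $\phi$, with $\Gamma$ acting on $\Lambda$ through $\phi$.

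The second step runs the Schwarz--Ganea join construction: one has $\operatorname{secat}(q)\le n$ iff the $(n{+}1)$-fold fiberwise join $q^{\ast(n+1)}\colon\mathcal E^{\ast(n+1)}_{B\Gamma}\to B\Gamma$ admits a global section. Its fiber is the join $\Lambda^{\ast(n+1)}$ of $n{+}1$ copies of the discrete $\Gamma$-set $\Lambda$; this space is $(n-1)$-connected and $\pi_n(\Lambda^{\ast(n+1)})\cong\widetilde H_n(\Lambda^{\ast(n+1)})\cong I(\Lambda)^{\otimes(n+1)}$ as a $\Z\Gamma$-module pulled back along $\phi$. The primary obstruction to a section thus lives in $H^{n+1}\bigl(\Gamma;\phi^{\ast}I(\Lambda)^{\otimes(n+1)}\bigr)$, and under the Hurewicz identification it coincides with the cup power $\phi^{\ast}(\beta_\Lambda)^{\,n+1}$ — the exact analogue of the Dranishnikov--Rudyak computation in the absolute case.

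The third step invokes the universality of the Berstein--Schwarz class in its relative form: every class in $H^{k}(\Gamma;\phi^{\ast}M)$ is the image of $\phi^{\ast}(\beta_\Lambda)^{k}$ under a coefficient homomorphism, so $\phi^{\ast}(\beta_\Lambda)^{k}$ is the universal obstruction and vanishes precisely for $k>\cd(\phi)$. Taking $n=\cd(\phi)$ then makes the primary obstruction $\phi^{\ast}(\beta_\Lambda)^{\,n+1}$ vanish, which is the first input toward a section of $q^{\ast(n+1)}$ and hence toward $\cat(\phi)\le\cd(\phi)$.

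The principal obstacle is the passage from vanishing of the \emph{primary} obstruction to the existence of a section: in general $\operatorname{secat}(q)$ is governed by an entire tower of higher and secondary obstructions lying in $H^{>n+1}(\Gamma;-)$, none of which is a priori detected by $\cd(\phi)$. In the absolute Eilenberg--Ganea situation this tower collapses because the fibration is the universal cover and $\beta_\Gamma$ is a genuine classifying obstruction, allowing one to extend a section cell by cell; for a general $\phi$ the pulled-back covering need not be universal in this strong sense, and I expect the crux of the conjecture to be a relative universality statement strong enough to kill every higher obstruction — equivalently, a direct construction of the categorical open cover from a $\cd(\phi)$-dimensional geometric model of $\phi$. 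Establishing this is exactly what remains open, which is why at present the equality is only accessible for restricted classes such as the (virtually) nilpotent groups treated here.
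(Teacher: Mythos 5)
You are attempting to prove Conjecture~\ref{con}, but the paper does not prove this statement --- it records that it is \emph{false}. Goodwillie \cite{Gr} constructed an epimorphism $\phi\colon G\to\Z^2$ from an infinitely generated group with $\cd(\phi)=1$, whereas $\cat(\phi)=2$, and \cite{DK} gives a counterexample even for an epimorphism between geometrically finite groups. Consequently no argument can close the gap you flag at the end of your proposal. The paper's actual positive results (Theorems~\ref{almost nilpotent} and~\ref{infranil}) establish the equality only for restricted classes of groups, and by an entirely different, geometric route: the epimorphism is realized as a locally trivial bundle of closed aspherical manifolds (a solvmanifold bundle with nilmanifold fiber, Lemma~\ref{solbundle}, via Saito's extension theorem and Mal'cev completions), and Lemma~\ref{compact support} then forces $\cd(\phi)$ to equal the dimensional upper bound $\dim N$ for $\cat(\phi)$, squeezing the two invariants together from above.

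That said, your obstruction-theoretic setup is correct as far as it goes: $\cat(\phi)$ does equal the sectional category of the pullback of the path fibration, which for aspherical $B\Lambda$ is the covering $\widetilde{B\Gamma}\times_\Gamma\Lambda\to B\Gamma$; the fiber $\Lambda^{\ast(n+1)}$ of the $(n{+}1)$-fold fiberwise join is $(n-1)$-connected with $\pi_n\cong I(\Lambda)^{\otimes(n+1)}$; and the primary obstruction is $\phi^{\ast}(\beta_\Lambda)^{n+1}$, which together with universality (Theorem~\ref{universal}) is exactly how one sees that $\cd(\phi)$ detects the primary obstruction. The fatal point is the step you candidly leave open: the hoped-for ``relative universality'' strong enough to kill all higher obstructions is not merely unproven but provably false. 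In Goodwillie's example every cup power $\phi^{\ast}(\beta_\Lambda)^{k}$ with $k\geq 2$ vanishes, yet the $2$-fold join admits no section, so $\cat(\phi)=2>1=\cd(\phi)$. In the absolute Eilenberg--Ganea situation the obstruction tower collapses because there is a $\cd(\Gamma)$-dimensional CW model of $B\Gamma$ (for $\cd\neq 2$), so higher obstructions have no room to live; for a general homomorphism there is no $\cd(\phi)$-dimensional geometric model of $\phi$, and the counterexamples show one cannot always exist. Your diagnosis of where the difficulty sits is accurate, but the correct conclusion is that the statement must be treated as refuted, with equality holding only under strong structural hypotheses such as those in Theorems~\ref{almost nilpotent} and~\ref{infranil}.
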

Jamie Scott \cite{Sc} investigated this conjecture for geometrically finite groups and proved it for monomorphisms of any groups, as well as for homomorphisms of free and free abelian groups. However, Tom Goodwillie \cite{Gr} provided a counterexample presenting an epimorphism $\phi: G\to\Z^2$ from an infinitely generated group $G$ with $\cd(\phi) = 1$, thus disproving the conjecture.

In our joint work with Dranishnikov \cite{DK}, we reduced the conjecture from arbitrary homomorphisms to epimorphisms and presented a counterexample with an epimorphism between geometrically finite groups. However, we also managed to prove the conjecture for epimorphisms between finitely generated, torsion-free nilpotent groups. It is worth exploring whether the torsion-free restriction can be removed from our result. While dealing with torsion, we investigated a related question: whether it is possible to have a homomorphism of torsion groups $\phi:\Gamma\to\Lambda$ with $\cd(\phi) < \infty$. In our recent paper \cite{Ku}, we provide a negative answer to this question.

In this paper, we prove the conjecture for finitely generated, torsion-free almost nilpotent groups. 
We recall an almost nilpotent group is the extension group of the infinite cyclic group by the torsion-free nilpotent groups. More explicitly, one can think that it is the semidirect product of the torsion-free nilpotent group with infinite cyclic group $\Z\ltimes_{\phi}\Gamma,$ where $\phi:\Z\to Aut(\Gamma)$ is a one-parameter group.

\begin{thm}(Theorem \ref{almost nilpotent})
Let $f:\Z\ltimes_{\phi}\Gamma\to\Z\ltimes_{\nu}\Lambda$ be epimorphism between torsion-free almost nilpotent groups such that $f|_{\Z}=Id.$ Then $\cat(f)=\cd(f)=\cd(\Lambda)+1.$
\end{thm}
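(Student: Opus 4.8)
The plan is to prove the chain $\cat(f)=\cd(f)=\cd(\Lambda)+1$ by sandwiching. The inequality $\cd(f)\le\cat(f)$ holds for all homomorphisms by the cup-length bound discussed in the introduction, so it suffices to establish $\cat(f)\le\cd(\Lambda)+1$ together with the reverse $\cd(f)\ge\cd(\Lambda)+1$. For the lower bound on $\cd(f)$, I would first pin down the cohomological dimensions. Since $\Gamma$ and $\Lambda$ are finitely generated torsion-free nilpotent, they are Poincaré duality groups, so $\cd(\Z\ltimes_\phi\Gamma)=\cd(\Gamma)+1$ and likewise for $\Lambda$; the hypothesis $f|_{\Z}=\mathrm{Id}$ and surjectivity of $f$ force $f$ to restrict to an epimorphism $\bar f:\Gamma\to\Lambda$ of the nilpotent pieces. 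The key number to target is $n=\cd(\Lambda)+1=\cd(\Z\ltimes_\nu\Lambda)$.

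\emph{Lower bound.} To see $\cd(f)\ge n$, I would exhibit a $\Z\Lambda$-module $M$ and a cohomology class in top degree $n$ whose image under $f^{*}$ is nonzero. The natural choice is the Berstein--Schwarz fundamental class: by the Dranishnikov--Rudyak characterization $\cd(f)=\max\{k\mid f^{*}(\beta_\Lambda)^{k}\ne 0\}$, so I need $f^{*}(\beta_\Lambda)^{n}\ne 0$ in $H^{n}(\Z\ltimes_\phi\Gamma)$. Because $\Z\ltimes_\nu\Lambda$ is an orientable Poincaré duality group of dimension $n$, the class $\beta_\Lambda^{n}$ is the fundamental cohomology class and is nonzero; the point is that $f^{*}$ preserves it. This is where I expect to use that $f$ is a degree-$\pm1$ map on the level of the classifying spaces, or equivalently that the induced map on top cohomology $H^{n}(\Z\ltimes_\nu\Lambda;\Z)\to H^{n}(\Z\ltimes_\phi\Gamma;\Z)$ is nonzero because $f$ restricts to the identity on the $\Z$-factor and an epimorphism of equidimensional $PD$-groups on the nilpotent part.

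\emph{Upper bound.} For $\cat(f)\le\cd(\Lambda)+1$, I would build an explicit open cover of $B(\Z\ltimes_\phi\Gamma)$ by $n+1$ sets on which $f$ is nullhomotopic, modeling the argument on the nilpotent case from \cite{DK}. The strategy is to fiber both classifying spaces over $B\Z=S^{1}$ with nilpotent-manifold fibers $B\Gamma,B\Lambda$, and to use the decomposition of $S^{1}$ into two contractible arcs together with a category-type cover of the fiber $B\Lambda$ (which has $\cat(\Lambda)=\cd(\Lambda)$ by Eilenberg--Ganea). One assembles a cover of the total space from the product structure: roughly $\cat(\Lambda)+1$ sets coming from the fiber direction and one extra from the circle direction, yielding at most $\cd(\Lambda)+1+1$ sets, which must then be sharpened down to $\cd(\Lambda)+1$. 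The mapping telescope / monodromy structure of the semidirect product is what allows the reduction by one.

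The main obstacle I anticipate is the upper bound, specifically controlling the interaction between the circle direction and the fiberwise category cover so that the naive count $\cat(\Lambda)+2$ is reduced to the sharp value $\cd(\Lambda)+1$. The condition $f|_{\Z}=\mathrm{Id}$ is crucial here: it ensures the monodromy of the source fibration maps compatibly to that of the target, so that a nullhomotopy of $\bar f$ over each piece of the fiber cover can be extended across the gluing of the two arcs of $S^{1}$ without requiring an additional open set. I would therefore spend most of the effort verifying that the nullhomotopies constructed fiberwise patch together consistently under the monodromy, using the Eilenberg--Ganea equality $\cat(\Lambda)=\cd(\Lambda)$ on the fiber and the contractibility of the arcs to absorb the circle direction into the existing sets rather than adding a new one.
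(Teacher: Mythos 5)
Your overall sandwich structure matches the paper's, but both halves need repair. The upper bound is actually much easier than you anticipate: the target group $\Z\ltimes_{\nu}\Lambda$ is the fundamental group of a closed aspherical solvmanifold of dimension $\cd(\Lambda)+1$, so $\cat(f)\le\min\{\cat(\text{source}),\cat(\text{target})\}\le\cd(\Lambda)+1$ immediately. No fiberwise cover, no monodromy patching, and no sharpening from $\cat(\Lambda)+2$ is required; the "main obstacle" you identify is a non-issue.

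The genuine gap is in your lower bound. The appeal to $f$ being "degree $\pm1$" on classifying spaces does not make sense: the source and target manifolds have different dimensions (the kernel of $f$ is a nontrivial nilpotent group in general), so there is no degree, and with ordinary $\Z$ coefficients the map $f^{*}:H^{n}(B\Lambda;\Z)\to H^{n}(B\Gamma;\Z)$ can genuinely vanish. For instance, for the Heisenberg nilmanifold fibering over $T^{2}$ the fiber circle is a commutator, hence nullhomologous, and the pullback of the fundamental class of $T^{2}$ is zero in degree $2$. This is exactly why the paper does not use trivial coefficients: it first realizes $f$ as a locally trivial bundle of solvmanifolds with nilmanifold fiber (Lemma \ref{solbundle}, which in turn requires extending $f$ to the simply connected completely solvable Lie groups via Saito's theorem, Lemma \ref{Extension map}), and then invokes Lemma \ref{compact support} from \cite{DK}, which gives nonvanishing of $f^{*}$ on $H^{n}(N;\Z\Gamma)$ with \emph{group-ring} coefficients via compactly supported cohomology of covers. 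Your proposal contains neither the geometric realization step nor a correct substitute for the group-ring coefficient argument, so the inequality $\cd(f)\ge\cd(\Lambda)+1$ is not established.
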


Furthermore, we confirm that the conjecture holds for finitely generated, torsion-free virtually nilpotent groups. 
\begin{thm}(Theorem \ref{infranil})
     Let $f:M^{m}\to N^{n}$ be a map of infranilmanifold $M$ that induces epimorphisms $\phi:\pi_{1}(M)\to\pi_{1}(N).$ Then $\cat(\phi)=\cd(\phi).$ In particular, $\cat(\phi)=\cd(\phi)=n.$
\end{thm}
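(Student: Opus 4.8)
The plan is to squeeze both invariants between $n$ and $n$. Since $N$ is a closed aspherical $n$-manifold, $\Lambda=\pi_1(N)$ is torsion-free, finitely generated and virtually nilpotent with $\cd(\Lambda)=n$, and $N=B\Lambda$; likewise $M=B\Gamma$. As any map into an aspherical space is determined up to homotopy by its effect on $\pi_1$, I may take $f\colon M\to N$ itself to be the classifying map of $\phi$, so $\cat(\phi)=\cat(f)$. For the upper bound I would use the elementary estimate $\cat(f)\le\cat(Y)$ for a map into $Y$: pulling back a categorical open cover $\{V_i\}$ of $N$ gives a cover $\{f^{-1}(V_i)\}$ of $M$ on which $f$ factors through a nullhomotopic inclusion $V_i\hookrightarrow N$. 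Hence $\cat(\phi)\le\cat(N)=\cat(\Lambda)=\cd(\Lambda)=n$ by Eilenberg--Ganea. Combined with the cup-length bound $\cd(\phi)\le\cat(\phi)$ recorded in the introduction, the theorem follows once I show $\cd(\phi)\ge n$.

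To prove $\cd(\phi)\ge n$ I would reduce to the nilpotent case via the Berstein--Schwarz class. Choose a finite-index, finitely generated, torsion-free nilpotent subgroup $\Gamma_0\le\Gamma$ (it exists because $\Gamma$ is virtually nilpotent and torsion-free) and set $\Lambda_0:=\phi(\Gamma_0)$. Then $\Lambda_0$ is nilpotent, torsion-free, and of finite index in $\Lambda$, so $\cd(\Lambda_0)=\cd(\Lambda)=n$ by Serre's theorem on torsion-free groups. The restriction $\phi_0:=\phi|_{\Gamma_0}\colon\Gamma_0\to\Lambda_0$ is an epimorphism of finitely generated torsion-free nilpotent groups, so the nilpotent case of the conjecture established in \cite{DK} gives $\cd(\phi_0)=\cd(\Lambda_0)=n$; equivalently $\phi_0^{*}(\beta_{\Lambda_0})^{n}\neq 0$ in $H^n(\Gamma_0,I(\Lambda_0)^{\otimes n})$.

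Finally I would transfer this nonvanishing back to $\Gamma$. The square formed by $\phi,\phi_0$ and the inclusions $\Gamma_0\hookrightarrow\Gamma$, $\Lambda_0\hookrightarrow\Lambda$ commutes, and the universal Berstein cocycle $\lambda\mapsto\lambda-1$ shows $\operatorname{res}^{\Lambda}_{\Lambda_0}\beta_{\Lambda}=j_{*}\beta_{\Lambda_0}$, where $j\colon I(\Lambda_0)\hookrightarrow I(\Lambda)$ is the inclusion of augmentation ideals. Naturality of cup products then yields
\[
\operatorname{res}^{\Gamma}_{\Gamma_0}\phi^{*}(\beta_{\Lambda})^{n}
=\phi_0^{*}\bigl(\operatorname{res}^{\Lambda}_{\Lambda_0}\beta_{\Lambda}\bigr)^{n}
=(j^{\otimes n})_{*}\,\phi_0^{*}(\beta_{\Lambda_0})^{n}.
\]
The key point is that $j$ is a \emph{split} injection of $\Z\Lambda_0$-modules: writing $\Z\Lambda=\bigoplus_i\Z\Lambda_0 g_i$ for right-coset representatives $g_i$, the $\Z\Lambda_0$-linear map $\Sigma\colon\sum_i a_i g_i\mapsto\sum_i a_i$ carries $I(\Lambda)$ onto $I(\Lambda_0)$ and satisfies $\Sigma\circ j=\mathrm{id}$. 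Hence $j^{\otimes n}$ is split injective (with retraction $\Sigma^{\otimes n}$, which is equivariant for the diagonal action), so $(j^{\otimes n})_{*}$ is injective on $H^n(\Gamma_0,-)$. Therefore $\operatorname{res}^{\Gamma}_{\Gamma_0}\phi^{*}(\beta_{\Lambda})^{n}\neq 0$, whence $\phi^{*}(\beta_{\Lambda})^{n}\neq 0$ and $\cd(\phi)\ge n$. Combining the bounds gives $n\le\cd(\phi)\le\cat(\phi)\le n$, so $\cat(\phi)=\cd(\phi)=n$.

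I expect the main obstacle to be precisely this last step: ensuring that the nonvanishing top power survives the change of coefficients induced by $I(\Lambda_0)\hookrightarrow I(\Lambda)$. Establishing the $\Z\Lambda_0$-module splitting of this inclusion, together with its compatibility with the diagonal action on the tensor powers $I(\Lambda)^{\otimes n}$, is the technical heart of the argument; once it is in place, restriction to a finite-index nilpotent subgroup upgrades the known nilpotent result to the virtually nilpotent setting. A secondary point to check is that $\Gamma_0$ and $\Lambda_0=\phi(\Gamma_0)$ can be arranged so that the defining square commutes and $\cd$ is preserved under passage to finite-index subgroups, both of which are guaranteed by torsion-freeness.
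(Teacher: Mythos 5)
Your proof is correct, but it takes a genuinely different route from the paper's on the key lower bound $\cd(\phi)\ge n$. The paper restricts only the source to a finite-index normal nilpotent subgroup $\Gamma'$, keeps $\Z\Lambda$-coefficients throughout, and argues geometrically: when $B\Lambda$ is a nilmanifold it realizes $B\Gamma'\to B\Lambda$ as a fiber bundle of nilmanifolds and invokes Lemma~\ref{compact support}; when $B\Lambda$ is a pure infranilmanifold it adds a second step using Shapiro's lemma (induced $=$ coinduced for finite index) and a connected pull-back infranilmanifold $f^*(B\Lambda')$ to reduce to the first case. You instead restrict \emph{both} sides, to $\Gamma_0$ and $\Lambda_0=\phi(\Gamma_0)$, apply the nilpotent case of \cite{DK} to $\phi_0$, and transfer the nonvanishing of $\phi_0^*(\beta_{\Lambda_0})^n$ back up through the $\Z\Lambda_0$-split inclusion $j\colon I(\Lambda_0)\hookrightarrow I(\Lambda)$ and its diagonal tensor powers. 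Your version buys two things: it treats the nilmanifold and pure infranilmanifold targets uniformly, with the Shapiro-type input replaced by the elementary coset-sum retraction $\Sigma$; and by taking $\Lambda_0=\phi(\Gamma_0)$ as the target you sidestep the point that $\phi(\Gamma')$ need not be all of $\Lambda$, whereas the paper's Step~1 asserts that $h_*\colon\Gamma'\to\Lambda$ is onto. The paper's approach, in exchange, stays entirely in $\Z\Lambda$-coefficients and exhibits the nonvanishing class concretely via the bundle structure rather than via the Berstein--Schwarz formalism. Minor points you should make explicit: $\Lambda_0$ is finitely generated (as the image of the finitely generated $\Gamma_0$) and has index at most $[\Gamma:\Gamma_0]$ in $\Lambda$ because $\phi$ is onto; and the coset representatives defining $\Sigma$ must include $g_1=1$ so that $\Sigma\circ j=\mathrm{id}$.
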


\section{Preliminaries}

We will begin by recalling some classical theorems in group theory and Lie group theory. Discrete groups will be denoted by Greek letters, while Lie groups will be denoted by Latin letters.

\begin{lemm}\label{normal}
Let $\Gamma$ be a group and let $\Gamma'$ be a subgroup of finite index. Then there exists a normal subgroup $\Gamma''$ of $\Gamma$ such that $\Gamma''$ is of finite index in $\Gamma$ and $\Gamma''\leq \Gamma'$.
\end{lemm}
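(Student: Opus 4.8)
The plan is to construct $\Gamma''$ as the \emph{normal core} of $\Gamma'$, realized as the kernel of the permutation representation of $\Gamma$ on the coset space. Write $n = [\Gamma:\Gamma'] < \infty$ and let $X = \Gamma/\Gamma'$ be the finite set of left cosets. Left multiplication gives an action of $\Gamma$ on $X$, which I encode as a homomorphism $\rho : \Gamma \to \mathrm{Sym}(X) \cong S_n$, where $\rho(g)$ sends $h\Gamma' \mapsto gh\Gamma'$. I then set $\Gamma'' = \ker \rho$, which is normal in $\Gamma$ by construction.

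Next I would verify the two remaining properties. For the inclusion $\Gamma'' \le \Gamma'$: if $g \in \Gamma''$ then $\rho(g)$ fixes every coset, in particular the identity coset $e\Gamma' = \Gamma'$, so $g\Gamma' = \Gamma'$ and hence $g \in \Gamma'$. For finiteness of the index, the first isomorphism theorem yields an embedding $\Gamma/\Gamma'' \hookrightarrow S_n$, whence $[\Gamma:\Gamma''] \le |S_n| = n! < \infty$.

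There is essentially no obstacle here: the only point requiring a moment's care is the finiteness of $[\Gamma:\Gamma'']$, and with this approach it is handled automatically by the embedding into the finite group $S_n$ rather than by a separate counting argument.

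Alternatively—and this description may be the more convenient one for later arguments—one can take $\Gamma'' = \bigcap_{g\in\Gamma} g\Gamma'g^{-1}$, the intersection of all conjugates of $\Gamma'$. This subgroup is manifestly normal, is contained in $\Gamma'$ (take $g = e$), and in fact coincides with $\ker\rho$; its finite index then follows either from the same embedding, or from the observations that each conjugate $g\Gamma'g^{-1}$ has index $n$, that there are only finitely many distinct conjugates, and that a finite intersection of finite-index subgroups again has finite index.
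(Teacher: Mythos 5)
Your proposal is correct and follows essentially the same argument as the paper: both take $\Gamma''$ to be the kernel of the permutation representation of $\Gamma$ on the left cosets of $\Gamma'$, check containment via the action on the identity coset, and deduce finite index from the embedding $\Gamma/\Gamma'' \hookrightarrow \mathrm{Sym}(X)$. The additional remark identifying $\Gamma''$ with the normal core $\bigcap_{g\in\Gamma} g\Gamma'g^{-1}$ is a harmless (and correct) supplement.
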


\begin{proof}
Let X be the set of left cosets of $\Gamma'.$ Consider $\phi:\Gamma\to Sym(X)$ given by $\phi(g)(a\Gamma')=(ga)\Gamma'.$ Then $\phi$ is a homomorphism. Take now $\Gamma''=\ker\phi.$ Then $\Gamma''$ is a normal subgroup of $\Gamma$ contained in $\Gamma'$. Finally, $\Gamma/\Gamma''$ is isomorphic to a subgroup of $Sym(X),$ which has order $n!,$ where $n=[\Gamma:\Gamma'].$ Thus, $[\Gamma:\Gamma'']$ is finite.
\end{proof}

A group $\Gamma$ is called a \textit{virtually nilpotent} if it has a nilpotent subgroup $\Gamma'$ of finite index. By Lemma~\ref{normal}, we can always assume that the finite index subgroup is indeed a finite index normal subgroup.

\subsection{Nilpotent groups}
The upper central series of a group $\Gamma$ is a chain of subgroups 
$${e}=Z_{0} \leqslant Z_{1} \leqslant....\leqslant Z_{n} \leqslant.... $$
where $Z_{1}=Z(\Gamma)$ is the center of the group, and $Z_{i+1}$ is the preimage under the canonical epimorphism $\Gamma\to\Gamma/Z_{i}$ of the center of $\Gamma/Z_{i}$.
A group $\Gamma$ is {\em nilpotent} if $Z_{n}=\Gamma$ for some $n$.
The least such $n$ is called  {\em the nilpotency class} of $\Gamma$, denoted $\nil(\Gamma)$. Note that the groups with the nilpotency class one are exactly abelian groups.

The lower central series of a group $\Gamma$ is a chain of subgroups 
$$\Gamma=\gamma_0(\Gamma)\ge \gamma_1(\Gamma)\ge\gamma_2(\Gamma)\ge...$$
defined as $\gamma_{i+1}(\Gamma)=[\gamma_i(\Gamma),\Gamma]$.
It's known that for nilpotent groups $\Gamma$ the nilpotency class $\nil(\Gamma)$ equals the least $n$ for which $\gamma_n(\Gamma)=1$.

\begin{prop}\label{nil}
 Let $\phi:\Gamma\rightarrow \Gamma'$ be an epimorphism. Then $\phi(Z(\Gamma))\subset Z(\Gamma')$ and $\phi(\gamma_i(\Gamma))=\gamma_i(\Gamma')$ for  all $i$.
\end{prop}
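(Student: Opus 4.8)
The plan is to prove the two assertions of Proposition~\ref{nil} separately, using the explicit commutator/center definitions of the preceding subsection together with surjectivity of $\phi$.

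\textbf{The center statement.} First I would show $\phi(Z(\Gamma))\subset Z(\Gamma')$. Let $z\in Z(\Gamma)$, so $zg=gz$ for all $g\in\Gamma$. I want to show $\phi(z)$ commutes with every element of $\Gamma'$. Here surjectivity is essential: an arbitrary $g'\in\Gamma'$ can be written as $g'=\phi(g)$ for some $g\in\Gamma$. Then
\[
\phi(z)g'=\phi(z)\phi(g)=\phi(zg)=\phi(gz)=\phi(g)\phi(z)=g'\phi(z),
\]
so $\phi(z)\in Z(\Gamma')$. Note that this direction only needs $\phi$ to be onto, and the inclusion can be strict in general, which is why it is stated as $\subset$ rather than $=$.

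\textbf{The lower central series statement.} For the equality $\phi(\gamma_i(\Gamma))=\gamma_i(\Gamma')$ I would argue by induction on $i$. The base case $i=0$ is $\phi(\gamma_0(\Gamma))=\phi(\Gamma)=\Gamma'=\gamma_0(\Gamma')$, which is exactly surjectivity. For the inductive step, assume $\phi(\gamma_i(\Gamma))=\gamma_i(\Gamma')$. The key algebraic fact is that $\phi$ respects commutators, $\phi([a,b])=[\phi(a),\phi(b)]$, and that $\gamma_{i+1}$ is generated by commutators $[x,g]$ with $x\in\gamma_i$ and $g\in\Gamma$. Since $\gamma_{i+1}(\Gamma)=[\gamma_i(\Gamma),\Gamma]$ is generated by such commutators, applying $\phi$ sends these generators to elements $[\phi(x),\phi(g)]$. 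By the inductive hypothesis $\phi(x)$ ranges over all of $\gamma_i(\Gamma')$, and by surjectivity $\phi(g)$ ranges over all of $\Gamma'$; hence the images generate exactly $[\gamma_i(\Gamma'),\Gamma']=\gamma_{i+1}(\Gamma')$. Since $\phi$ is a homomorphism it carries the subgroup generated by a set onto the subgroup generated by the images, giving $\phi(\gamma_{i+1}(\Gamma))=\gamma_{i+1}(\Gamma')$.

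The only point requiring genuine care—and the main obstacle, though a mild one—is the bookkeeping in the inductive step: one must verify that the image of the generating set of $\gamma_{i+1}(\Gamma)$ is precisely the generating set of $\gamma_{i+1}(\Gamma')$, using both the inductive hypothesis (to control the $\gamma_i$ factor) and surjectivity (to control the $\Gamma$ factor) simultaneously. A clean way to phrase this is to note that for a surjective homomorphism $\phi$ one always has $\phi([A,B])=[\phi(A),\phi(B)]$ for subgroups $A,B\le\Gamma$ with $\phi(A),\phi(B)$ the full images; the equality then follows by applying this with $A=\gamma_i(\Gamma)$, $B=\Gamma$ and invoking the inductive hypothesis. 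Everything else reduces to the homomorphism property and the definitions.
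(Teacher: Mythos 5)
Your proof is correct and complete; the paper itself gives no argument here, simply declaring the result ``straightforward'' and citing Bechtell's book, and your write-up supplies exactly the standard details that citation stands in for (surjectivity to hit arbitrary elements of $\Gamma'$ for the center inclusion, and induction plus $\phi(\langle S\rangle)=\langle\phi(S)\rangle$ for the lower central series).
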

\begin{proof}
Straightforward (see for example~\cite{B}, Theorem 5.1.3).
 \end{proof}

\subsection{Construction of infranilmanifold, nilmanifold, and solvmanifold.}
{\em A infranilmanifold} is a closed manifold diffeomorphic to the orbit space $G/\Gamma$ of a simply-connected nilpotent Lie group G of the action of a discrete torsion-free subgroup $\Gamma$ of the semidirect product $G\rtimes K$ where K is a maximal subgroup of $Aut(G).$ If $\Gamma$ lies in the $G$ factor, then the infranilmanifold is called {\em a nilmanifold}. 
Every infranilmanifold $G/\Gamma$ is finitely covered by the nilmanifold $G/\Gamma\cap G.$

Let $\textfrak{g}$ be the Lie algebra of a simply-connected nilpotent Lie group G. It is well-known that the exponential map $exp:\textfrak{g}\to G$ is a global diffeomorphism and the quotient map $G\to G/\Gamma$ is the universal covering map. Hence, every infranilmanifold and nilmanifold are the Eilenberg-MacLane spaces $K(\Gamma, 1).$ By the Mal'cev Theorem~\cite{Ma2} every torsion-free, finitely generated nilpotent group $\Gamma$ can be realized as the fundamental group of some nilmanifold.
The corresponding simply-connected nilpotent Lie group $G$ is obtained as the Mal'cev completion of $\Gamma$. 
Moreover, $\Gamma$ is a lattice in G. In this paper, a lattice in $G$ is a cocompact discrete subgroup. 

A {\em solvmanifold} is a closed manifold diffeomorphic to the quotient space $G/\Gamma$ of a simply-connected solvable Lie group G by discrete cocompact subgroup $\Gamma$ of G.  
It is known that every solvmanifold $G/\Gamma$ can be naturally fibered over a torus with a nilmanifold as fiber:
$N/\Gamma_{N}=(N\Gamma)/\Gamma\to G/\Gamma \to G/(N\Gamma)=T^{k}$
where $N$ is the nilradical of G and $\Gamma_{N}:=\Gamma\cap N$ is a lattice \cite{Ra}. This is known as the Mostow fiber bundle. 
From the Mostow fiber bundle, we obtain that the fundamental group of a solvmanifold $G/\Gamma$ fits into the short exact sequence: $$1\to\Gamma_{N}\to\Gamma\to\Z^{k}\to 1.$$ 
\subsection{Solvable group of a special type: R type}

A Lie Group $G$  is {\em completely solvable} if the corresponding Lie algebra $\textfrak{g}$ satisfies the property that any adjoint linear operator $\text{ad} S:\textfrak{g}\to\textfrak{g},$ $S\in \textfrak{g}$ has only real eigenvalues. In particular, any nilpotent Lie group is completely solvable \cite{Ma1}. 

\begin{thm}(\cite{Sa})\label{Saito}
Let $G$ and $H$ be simply-connected completely solvable Lie groups. Suppose G contains a lattice $\Gamma.$ For any homomorphism $\phi:\Gamma\to H,$ there exists a unique extension to a homomorphism $\bar{\phi}: G\to H.$ 
\end{thm}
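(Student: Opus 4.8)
The plan is to reduce the statement to the construction of a single Lie algebra homomorphism and to exploit, at the crucial step, that complete solvability forces the relevant exponential functions to be real. Since $G$ and $H$ are completely solvable and simply connected, the exponential maps $\exp_G:\mathfrak{g}\to G$ and $\exp_H:\mathfrak{h}\to H$ are diffeomorphisms, so $\log_G$ and $\log_H$ are defined globally. Because $G$ is simply connected, Lie group homomorphisms $\bar\phi:G\to H$ correspond bijectively to Lie algebra homomorphisms $D:\mathfrak{g}\to\mathfrak{h}$ via $\bar\phi\circ\exp_G=\exp_H\circ D$; under this correspondence the required condition $\bar\phi|_\Gamma=\phi$ becomes, by injectivity of $\exp_H$, the linear condition $D(\log_G\gamma)=\log_H\phi(\gamma)$ for every $\gamma\in\Gamma$. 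Thus it suffices to produce a Lie algebra homomorphism $D$ satisfying this relation and then integrate it.

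First I would establish uniqueness together with a spanning fact. Let $N$ be the nilradical of $G$ with Lie algebra $\mathfrak{n}$. By the Mostow structure recalled above, $\Gamma_N=\Gamma\cap N$ is a lattice in the nilpotent group $N$, and $\Gamma$ projects onto a lattice $\Z^k$ in $G/N\cong\R^k$. Since $\log_G$ restricts to $\log_N$ on $N$ and commutes with the projection $\mathfrak{g}\to\mathfrak{g}/\mathfrak{n}$, the set $\log_G\Gamma_N$ spans $\mathfrak{n}$ while the logarithms of lifts of a basis of $\Z^k$ span $\mathfrak{g}$ modulo $\mathfrak{n}$; hence $\log_G\Gamma$ spans $\mathfrak{g}$ over $\R$. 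A linear map is determined by its values on a spanning set, so the relation $D(\log_G\gamma)=\log_H\phi(\gamma)$ determines $D$, and uniqueness of $\bar\phi$ follows.

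For existence I would build $D$ in two stages. On $\mathfrak{n}$: the image $\phi(\Gamma_N)$ is a finitely generated torsion-free nilpotent subgroup of $H$, and since $\exp_H$ is a diffeomorphism every element of $H$ has unique roots, so the Mal'cev completion of $\phi(\Gamma_N)$ is realized as a simply-connected nilpotent Lie subgroup of $H$. Mal'cev rigidity then extends $\phi|_{\Gamma_N}$ to a Lie group homomorphism $N\to H$, whose differential is a Lie algebra homomorphism $D_{\mathfrak{n}}:\mathfrak{n}\to\mathfrak{h}$ with $D_{\mathfrak{n}}(\log_G\delta)=\log_H\phi(\delta)$ for $\delta\in\Gamma_N$. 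I then extend $D_{\mathfrak{n}}$ to a linear map $D:\mathfrak{g}\to\mathfrak{h}$ by choosing $\gamma_1,\dots,\gamma_k\in\Gamma$ projecting to a basis of $\Z^k$ and setting $D(\log_G\gamma_i)=\log_H\phi(\gamma_i)$; the chosen elements together with a basis of $\mathfrak{n}$ form a basis of $\mathfrak{g}$, so $D$ is well defined.

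The main obstacle is verifying that this $D$ is a Lie algebra homomorphism, which reduces to the intertwining relation $D\circ\mathrm{ad}(\log_G\gamma_i)=\mathrm{ad}(D\log_G\gamma_i)\circ D$ on the ideal $\mathfrak{n}$ (the quotient $\mathfrak{g}/\mathfrak{n}$ being abelian handles the remaining brackets). Since $\Gamma_N$ is normal in $\Gamma$ and $\phi$ is a homomorphism, conjugation gives $D_{\mathfrak{n}}\circ\mathrm{Ad}(\gamma_i^{m})=\mathrm{Ad}(\phi(\gamma_i)^{m})\circ D_{\mathfrak{n}}$ for all $m\in\Z$, that is $D_{\mathfrak{n}}\circ e^{m\,\mathrm{ad}(\log_G\gamma_i)}=e^{m\,\mathrm{ad}(\log_H\phi(\gamma_i))}\circ D_{\mathfrak{n}}$ for all integers $m$. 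Here is exactly where complete solvability is indispensable: the generators $\mathrm{ad}(\log_G\gamma_i)$ and $\mathrm{ad}(\log_H\phi(\gamma_i))$ have only real eigenvalues, so the matrix-valued function $t\mapsto D_{\mathfrak{n}}\,e^{t\,\mathrm{ad}(\log_G\gamma_i)}-e^{t\,\mathrm{ad}(\log_H\phi(\gamma_i))}\,D_{\mathfrak{n}}$ has entries that are real exponential polynomials, which possess only finitely many real zeros unless identically zero. Vanishing at every integer forces it to vanish identically, and differentiating at $t=0$ yields the required infinitesimal relation. Integrating $D$ to $\bar\phi:G\to H$ and using $D(\log_G\gamma)=\log_H\phi(\gamma)$ together with the injectivity of $\exp_H$ shows $\bar\phi|_\Gamma=\phi$, completing the argument. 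I expect this integer-to-real passage, rather than the accompanying bookkeeping, to be the genuine difficulty.
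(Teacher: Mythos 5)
The paper offers no proof of this statement: Theorem~\ref{Saito} is quoted from Saito \cite{Sa} and used as a black box, so there is no in-paper argument to compare yours against. Judged on its own terms, your outline does isolate the mechanism that makes the theorem true for completely solvable groups, namely the passage from the integer relations $D_{\mathfrak{n}}\circ\mathrm{Ad}(\gamma_i^{m})=\mathrm{Ad}(\phi(\gamma_i)^{m})\circ D_{\mathfrak{n}}$ to the infinitesimal one via the fact that a real exponential polynomial vanishing on $\Z$ vanishes identically; this is exactly where the hypothesis of real eigenvalues enters and exactly why the statement fails for, e.g., the universal cover of $E(2)$, where $e^{t\,\mathrm{ad}X}$ is periodic. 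Your uniqueness argument (that $\log_G\Gamma$ spans $\mathfrak{g}$, using Mostow's theorem that $\Gamma\cap N$ is a lattice in the nilradical) is also correct.

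There are, however, two genuine gaps in the existence half. First, the construction of $D_{\mathfrak{n}}$ is wrong as stated: you assert that the Mal'cev completion of $\phi(\Gamma_N)$ is realized as a simply connected nilpotent Lie subgroup of $H$, but $\phi(\Gamma_N)$ need not be discrete in $H$, and its abstract Mal'cev completion can have dimension larger than $\dim H$. For instance $\phi:\Z^{2}\to\R$, $(m,n)\mapsto m+n\sqrt{2}$, has dense image whose completion is $\R^{2}$, which does not embed in $\R$ (the desired $D_{\mathfrak{n}}$ still exists there, but not by your argument). The standard repair is to pass to the graph of $\phi|_{\Gamma_N}$ inside $N\times H$, which \emph{is} a discrete finitely generated torsion-free nilpotent subgroup, invoke the existence of a connected nilpotent hull of such a subgroup in a simply connected solvable Lie group (a nontrivial Mostow/Mal'cev-type theorem), and apply Mal'cev rigidity to the projection onto $N$; note this substep is essentially the nilpotent-source case of Saito's theorem itself, so it cannot be waved through. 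Second, after defining $D$ on a basis you must check not only the intertwining relation on $\mathfrak{n}$ but also $D[\log\gamma_i,\log\gamma_j]=[D\log\gamma_i,D\log\gamma_j]$. Observing that $\mathfrak{g}/\mathfrak{n}$ is abelian only tells you the left-hand side is $D_{\mathfrak{n}}$ applied to an element of $\mathfrak{n}$; it does not give the identity. What $\phi$ controls is $\log[\gamma_i,\gamma_j]$, and relating that to $[\log\gamma_i,\log\gamma_j]$ involves the full, non-terminating Campbell--Hausdorff series in a solvable group. This is a substantive part of Saito's proof that your outline does not address.
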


\subsection{Berstein-Schwarz cohomology class}
The Berstein-Schwarz class of a discrete group $\Gamma$ is the first obstruction $\beta_{\Gamma}$ to a lift of $B\Gamma=K(\Gamma,1)$ to the universal covering $E\Gamma$. 
Note that $\beta_{\Gamma}\in H^1(\Gamma,I(\Gamma))$ where $I(\Gamma)$ is the augmentation ideal of the group ring $\Z\Gamma$~\cite{Be},\cite{Sch}.

\begin{thm}[Universality~\cite{DR},\cite{Sch}]\label{universal}
For any cohomology class $\alpha\in H^k(\Gamma,L)$, there is a homomorphism of $\Gamma$-modules $I(\Gamma)^k\to L$ such that the induced homomorphism for cohomology takes $(\beta_{\Gamma})^k\in H^k(\Gamma,I(\Gamma)^k)$ to $\alpha$,  where $I(\Gamma)^k=I(\Gamma)\otimes\dots\otimes I(\Gamma)$ and $(\beta_{\Gamma})^k=\beta_{\Gamma}\smile\dots\smile\beta_{\Gamma}$.
\end{thm}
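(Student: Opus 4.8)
The plan is to realize $\beta_\Gamma$ as the class of an extension of $\Z\Gamma$-modules and to build from it a convenient free resolution of $\Z$ whose $k$-th syzygy is exactly $I(\Gamma)^{\otimes k}$; universality then drops out of a factorization argument. Recall that $\beta_\Gamma\in H^1(\Gamma,I(\Gamma))=\operatorname{Ext}^1_{\Z\Gamma}(\Z,I(\Gamma))$ is the class of the augmentation sequence $0\to I(\Gamma)\to\Z\Gamma\xrightarrow{\varepsilon}\Z\to 0$. Since $I(\Gamma)$ is $\Z$-free, so is each $I(\Gamma)^{\otimes j}$, and tensoring the augmentation sequence over $\Z$ by $I(\Gamma)^{\otimes j}$ (with the diagonal $\Gamma$-action) preserves exactness, giving short exact sequences
\[ (E_j)\colon\quad 0\to I(\Gamma)^{\otimes(j+1)}\xrightarrow{q_{j+1}}\Z\Gamma\otimes I(\Gamma)^{\otimes j}\xrightarrow{\,p_j\,}I(\Gamma)^{\otimes j}\to 0, \]
where $p_j=\varepsilon\otimes\operatorname{id}$. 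Setting $Q_j=\Z\Gamma\otimes I(\Gamma)^{\otimes j}$ and $d_j=q_j\circ p_j$, and using the untwisting isomorphism $g\otimes v\mapsto g\otimes g^{-1}v$ to see that each $Q_j$ is a free $\Z\Gamma$-module, I obtain a free resolution
\[ \cdots\to Q_2\xrightarrow{d_2}Q_1\xrightarrow{d_1}Q_0=\Z\Gamma\xrightarrow{\varepsilon}\Z\to 0 \]
whose $k$-th syzygy is $\ker(d_{k-1})=\operatorname{im}(q_k)\cong I(\Gamma)^{\otimes k}$, with syzygy projection $p_k\colon Q_k\to I(\Gamma)^{\otimes k}$.

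The crux, and the step I expect to be the main obstacle, is to identify the universal class $(\beta_\Gamma)^{\smile k}\in H^k(\Gamma,I(\Gamma)^{\otimes k})$ with the class of the $k$-fold spliced extension $(E_0)\ast\cdots\ast(E_{k-1})$, i.e. with the cohomology class represented by the syzygy projection $p_k$ in the complex $\operatorname{Hom}_{\Z\Gamma}(Q_\bullet,-)$. I would prove this by induction on $k$. The base case $k=1$ is the definition of $\beta_\Gamma$ as the class of $(E_0)$. For the inductive step I would invoke the standard compatibility between the cup product in group cohomology and the Yoneda product in $\operatorname{Ext}$: the pairing $H^1(\Gamma,I(\Gamma))\otimes H^{k-1}(\Gamma,I(\Gamma)^{\otimes(k-1)})\to H^k(\Gamma,I(\Gamma)^{\otimes k})$ induced by the diagonal is computed, on extension classes, as the Yoneda splice $[\,\beta_\Gamma\otimes\operatorname{id}\,]\circ(\,\cdot\,)$. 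Since $\beta_\Gamma\otimes\operatorname{id}_{I(\Gamma)^{\otimes(k-1)}}$ is exactly the class of $(E_{k-1})$, splicing it onto the extension representing $(\beta_\Gamma)^{\smile(k-1)}$ (which by induction is $(E_0)\ast\cdots\ast(E_{k-2})$) yields $(E_0)\ast\cdots\ast(E_{k-1})$, as required. Verifying this diagonal/Yoneda bookkeeping carefully is the only genuinely technical point.

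With this identification in hand, universality is immediate. Compute $H^k(\Gamma,L)=\operatorname{Ext}^k_{\Z\Gamma}(\Z,L)$ from the resolution $Q_\bullet$: a $k$-cocycle is a $\Z\Gamma$-map $\tilde a\colon Q_k\to L$ with $\tilde a\circ d_{k+1}=0$. Because $\operatorname{im}(d_{k+1})=\ker(d_k)=\ker(p_k)=\operatorname{im}(q_{k+1})\cong I(\Gamma)^{\otimes(k+1)}$, the cocycle condition says precisely that $\tilde a$ vanishes on $\ker(p_k)$, so $\tilde a$ factors through the $\Z\Gamma$-surjection $p_k$ as $\tilde a=\theta\circ p_k$ for a unique $\Z\Gamma$-module homomorphism $\theta\colon I(\Gamma)^{\otimes k}\to L$. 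Given $\alpha\in H^k(\Gamma,L)$, choose such a representing cocycle $\tilde a$ and the associated $\theta$. Since $(\beta_\Gamma)^{\smile k}$ is represented by $p_k$, the coefficient homomorphism $\theta_{\ast}$ sends it to $[\theta\circ p_k]=[\tilde a]=\alpha$. Thus $\theta\colon I(\Gamma)^{\otimes k}\to L$ is the required homomorphism of $\Gamma$-modules with $\theta_{\ast}\big((\beta_\Gamma)^{\smile k}\big)=\alpha$, completing the proof.
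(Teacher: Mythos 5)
Your argument is correct. Note, however, that the paper does not prove Theorem \ref{universal} at all: it is quoted as a known result with references to \cite{DR} and \cite{Sch}, so there is no in-paper proof to compare against. What you have written is essentially the standard algebraic proof from those sources: splice the short exact sequences obtained by tensoring the augmentation sequence $0\to I(\Gamma)\to\Z\Gamma\to\Z\to0$ with the $\Z$-free modules $I(\Gamma)^{\otimes j}$ (untwisting to see the middle terms are $\Z\Gamma$-free) into a free resolution of $\Z$ with $k$-th syzygy $I(\Gamma)^{\otimes k}$, identify $(\beta_\Gamma)^{\smile k}$ with the class of the syzygy projection $p_k$ via the cup-product/Yoneda-splice compatibility, and then factor an arbitrary cocycle $\tilde a\colon Q_k\to L$ through $p_k$ to get the coefficient homomorphism $\theta$. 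All the steps check out: exactness of the spliced complex follows from injectivity of the $q_j$ and surjectivity of the $p_j$, and the cocycle condition $\tilde a\circ d_{k+1}=0$ is indeed equivalent to vanishing on $\ker(p_k)$. The one point you rightly flag as technical, the identification of the $k$-fold cup power with the $k$-fold Yoneda splice, is the standard compatibility of cup and composition products (Brown, Ch.~V), so the proof is complete modulo that bookkeeping. Schwarz's original argument is instead topological, via fiberwise joins of the universal covering and obstruction theory; your route is the purely algebraic counterpart.
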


\subsection{Main Lemma} The main results of this paper rely on the following:
\begin{lemm}\label{compact support}\cite{DK} 
For every locally trivial bundle of closed aspherical manifolds  $f:M^{m}\rightarrow N^{n}$ with compact connected fiber $F$ 
the induced homomorphism  $$f^{*}:H^{n}(N;\Z\Gamma) \rightarrow H^{n}(M;\Z\Gamma)$$ is nonzero, where $\Gamma=\pi_1(N)$.
\end{lemm}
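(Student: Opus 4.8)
The plan is to pass from geometry to the language of group (co)homology and then exploit Poincar\'e duality together with Shapiro's lemma. Since $F$, $M$, $N$ are closed aspherical manifolds and $F$ is connected, the homotopy exact sequence of the bundle collapses to a short exact sequence of fundamental groups
\[
1 \to H \to \pi \xrightarrow{\ \rho\ } \Gamma \to 1,
\]
where $H=\pi_1(F)$, $\pi=\pi_1(M)$, $\Gamma=\pi_1(N)$, and $H=\Ker(\rho)$. In particular $H$ acts trivially on the $\pi$-module $\Z\Gamma$. As $N$ and $M$ are aspherical we have $H^*(N;-)=H^*(\Gamma;-)$ and $H^*(M;-)=H^*(\pi;-)$, and the map $f^*$ is exactly the inflation $\rho^*\colon H^n(\Gamma;\Z\Gamma)\to H^n(\pi;\Z\Gamma)$.

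First I would treat the oriented case and dualize. Poincar\'e duality on the closed $n$-manifold $N$ gives $H^n(\Gamma;\Z\Gamma)\cong H_0(\Gamma;\Z\Gamma)=(\Z\Gamma)_\Gamma=\Z$, while on the closed $m$-manifold $M$ it gives $H^n(\pi;\Z\Gamma)\cong H_{m-n}(\pi;\Z\Gamma)=H_d(\pi;\Z\Gamma)$, with $d=m-n=\dim F$. Under these duality isomorphisms $f^*=\rho^*$ becomes the homological umkehr (transfer) map $f_!\colon H_0(\Gamma;\Z\Gamma)\to H_d(\pi;\Z\Gamma)$, so it suffices to show $f_!\neq 0$. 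The decisive simplification is that, as a $\pi$-module, $\Z\Gamma=\Z[\pi/H]=\operatorname{Ind}_H^\pi\Z$, so Shapiro's lemma in homology yields
\[
H_d(\pi;\Z\Gamma)=H_d(\pi;\operatorname{Ind}\nolimits_H^\pi\Z)\cong H_d(H;\Z)=H_d(F;\Z)=\Z .
\]
Thus both sides are infinite cyclic and the claim reduces to showing that $f_!$ carries a generator to a generator. (It is precisely the cohomology/homology mismatch that forces this route: for $[\pi:H]=\infty$ one has $\operatorname{Ind}_H^\pi\Z\neq\operatorname{Coind}_H^\pi\Z$, so cohomological Shapiro does \emph{not} apply to $H^n(\pi;\Z\Gamma)$ directly, and Poincar\'e duality is what converts the problem into the homological form where $\operatorname{Ind}$ is the correct functor.)

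The remaining step is where the real content lies. The subgroup $H\le\pi$ corresponds to the cover $M_H\to M$, which is the pullback $f^*\widetilde N$ of the universal cover $\widetilde N\simeq\R^n$; being a bundle with fiber $F$ over a contractible base, $M_H\simeq F$, and the Shapiro isomorphism $H_d(\pi;\Z\Gamma)\cong H_d(M_H;\Z)=H_d(F;\Z)$ is induced by this covering. On the other hand, the umkehr map sends the point class generating $H_0(\Gamma;\Z\Gamma)=\Z$ to the class of a fiber $[F]$, the preimage of a regular point. I would verify that, under the Shapiro identification, this fiber class is exactly the fundamental class of $F$, hence a generator; then $f_!$ is an isomorphism and $f^*\neq 0$. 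The main obstacle is precisely this compatibility: matching the umkehr image $f_!([\mathrm{pt}])$ with the Shapiro generator, i.e. confirming that pushing the fiber into $M$ with the trivial coefficient $e\in\Z\Gamma$ is the fundamental class of the cover $M_H\simeq F$ and not a proper multiple. This is also the source of the name ``compact support'': on $N$ one has $H^n(\Gamma;\Z\Gamma)=H^n_c(\widetilde N)=\Z$, the compactly supported cohomology of $\widetilde N\cong\R^n$, which is where the fiber class is detected.

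Finally, to drop the orientation hypotheses I would replace ordinary Poincar\'e duality by its twisted form $H^k(\,\cdot\,;A)\cong H_{\dim-k}(\,\cdot\,;A\otimes\Z^{w})$ with orientation character $w$. Since twisting the free module $\Z\Gamma$ by a character keeps it free of rank one, the coinvariants computation $(\Z\Gamma\otimes\Z^{w})_\Gamma=\Z$ is unchanged, so both groups stay infinite cyclic and the generator-to-generator argument survives up to sign; alternatively one passes to the orientation double cover and uses the transfer. In every case the nonvanishing of $f^*$ is preserved.
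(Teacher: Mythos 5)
Note first that the paper does not actually prove this lemma --- it is quoted from \cite{DK} --- so the comparison is with the argument there. Your outline is correct in substance but takes a genuinely different, and more roundabout, route. The proof in \cite{DK} stays entirely in compactly supported cohomology: one identifies $H^n(N;\Z\Gamma)\cong H^n_c(\widetilde N)\cong H^n_c(\R^n)=\Z$ and $H^n(M;\Z\Gamma)\cong H^n_c(M_H)$, where $M_H=f^*\widetilde N$ is the cover of $M$ corresponding to $\ker(\pi_1(M)\to\Gamma)$; since $M_H\to\widetilde N$ is a bundle with compact fiber over a contractible base, $M_H\cong\widetilde N\times F$, and $f^*$ becomes the pullback along the proper projection $\widetilde N\times F\to\widetilde N$. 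This is injective for free: a point of $F$ gives a proper section $\widetilde N\hookrightarrow\widetilde N\times F$, and functoriality of $H^*_c$ under proper maps makes the composite the identity on $H^n_c(\widetilde N)=\Z$. No duality, no Shapiro, no orientation discussion is needed. Your version dualizes both sides by Poincar\'e duality and invokes homological Shapiro to turn the question into whether the umkehr map carries the point class to a generator of $H_d(F;\Z)$; that is true, but it is exactly the step you defer (``I would verify\dots''), and it is where all the work sits --- naturality of duality with $\Z\Gamma$-coefficients, signs, and the identification of the Shapiro generator with the fiber's fundamental class. Your treatment of the non-orientable case is also incomplete on the $M$-side: after twisting by the orientation character $w_M$ the Shapiro reduction lands in $H_d(F;\Z^{w_M|_F})=H_d(F;\Z^{w_F})\cong H^0(F;\Z)=\Z$, which needs twisted duality on $F$, not just the observation that $(\Z\Gamma\otimes\Z^{w})_\Gamma=\Z$ on the $N$-side (untwisted $H_d(F;\Z)$ would vanish for non-orientable $F$, so the twist is essential, not cosmetic). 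What your approach buys is a conceptual identification of $f^*$ with integration over the fiber; what the direct approach buys is a two-line proof with no compatibility checks. Both are valid, but yours is not yet complete as written.
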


In the paper, we use the notation $H^*(\Gamma, A)$ for the cohomology of a group $\Gamma$ with coefficient in $\Gamma$-module $A$. The cohomology groups of a space $X$ with the fundamental group $\Gamma$ we denote as $H^*(X;A)$. Thus, $H^*(\Gamma,A)=H^*(B\Gamma;A)$ where $B\Gamma=K(\Gamma,1)$.

\section{Homomorphisms of almost nilpotent group}

In this section, we prove the conjecture to special types of solvable groups, namely to almost nilpotent groups. We recall that {\em an almost nilpotent Lie group} is a non-nilpotent Lie group with a codimension 1 nilpotent normal subgroup. N is nilradical of a simply-connected Lie group G if the Lie algebra of N, $\textfrak{n}$, is the largest nilpotent ideal contained in the Lie algebra of G, $\textfrak{g}$.   

\begin{lemm}
Let G be the simply connected Lie group, and let N be a nilradical of G. If $dim G/N=1,$ then the fundamental group $\Gamma$ of solvmanifold $G/\Gamma$ is given by semidirect product $\Gamma=\Z\ltimes_{\phi}\Z_{N}.$ Further, the simply connected Lie group G has the form $G=R\ltimes_{\bar{\phi}}N.$ 
\end{lemm}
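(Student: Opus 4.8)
The plan is to read off both splittings from the Mostow fiber bundle together with the hypothesis $\dim G/N=1$. Since $N$ is the nilradical and $\dim(G/N)=1$, the base of the Mostow bundle $N/\Gamma_{N}\to G/\Gamma\to G/(N\Gamma)=T^{k}$ has $k=\dim(G/N)=1$, so $T^{k}=S^{1}$ and the induced short exact sequence of fundamental groups is
\[
1\to\Gamma_{N}\to\Gamma\to\Z\to 1,
\]
where $\Gamma_{N}=\Gamma\cap N$. Because $\Z$ is free, hence projective, this sequence splits: lifting a generator of $\Z$ to an element of $\Gamma$ determines a section $\Z\to\Gamma$. The splitting exhibits $\Gamma$ as the semidirect product $\Gamma=\Z\ltimes_{\phi}\Gamma_{N}$, where $\phi\colon\Z\to\mathrm{Aut}(\Gamma_{N})$ is conjugation by the lifted generator. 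This settles the first assertion.

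For the Lie group I would first identify the quotient $G/N$. As the nilradical, $N$ is a closed connected normal subgroup, so $G\to G/N$ is a fiber bundle with fiber $N$; the homotopy exact sequence, combined with $\pi_{1}(G)=0$ and $\pi_{0}(N)=0$, forces $\pi_{1}(G/N)=0$. A simply connected $1$-dimensional Lie group is $\R$, so $G/N\cong\R=:R$. It then remains to split the extension $1\to N\to G\xrightarrow{\pi}R\to 1$. Choose $S\in\textfrak{g}$ with $d\pi(S)=1$, identifying the Lie algebra of $R$ with $\R$, and set $\sigma(t)=\exp_{G}(tS)$. Since $\R S$ is a $1$-dimensional abelian subalgebra, $\sigma\colon\R\to G$ is a homomorphism, and $\pi\circ\sigma$ is the one-parameter subgroup of $\R$ with derivative $1$, hence $\pi\circ\sigma=\mathrm{id}_{\R}$. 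Thus $\sigma$ is an embedded section; setting $R:=\sigma(\R)$ one checks $R\cap N=\{e\}$ and $RN=G$, so $G=R\ltimes_{\bar\phi}N$ with $\bar\phi\colon R\to\mathrm{Aut}(N)$ given by conjugation.

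The one delicate point is that the section $\sigma$ be a genuine closed embedding of $\R$ rather than a line winding densely in $G$; this is exactly where simple connectivity of $G$ enters, via the identification $G/N\cong\R$ (not $S^{1}$), since the relation $\pi\circ\sigma=\mathrm{id}_{\R}$ then already forces $\sigma$ to be injective with closed image. Finally, the two semidirect products are compatible: conjugation in $\Gamma$ is the restriction of conjugation in $G$, so the action $\phi$ of the $\Z$-factor on $\Gamma_{N}$ is the restriction of the action $\bar\phi$ of $R$ on $N$, and Theorem~\ref{Saito} guarantees that each automorphism of the lattice $\Gamma_{N}$ induced by $\phi$ extends uniquely to the corresponding automorphism of its Mal'cev completion $N$.
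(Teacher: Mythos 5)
Your proof is correct, and its first half (the splitting $\Gamma=\Z\ltimes_{\phi}\Gamma_{N}$ from the Mostow exact sequence $1\to\Gamma_{N}\to\Gamma\to\Z\to 1$ plus freeness of $\Z$) is exactly the paper's argument. The second half takes a genuinely different route. The paper never builds the Lie-group section by hand: it takes the discrete section $s:\Z\to\Gamma\subset G$, extends it to $\bar{s}:\R=G/N\to G$ by Saito's rigidity theorem (Theorem~\ref{Saito}), and then uses the uniqueness clause of that theorem to conclude $\bar{\pi}\circ\bar{s}=\mathrm{Id}$, hence $G=\R\ltimes_{\bar{\phi}}N$. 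You instead construct the section directly as the one-parameter subgroup $\sigma(t)=\exp_{G}(tS)$ with $d\pi(S)=1$, after observing $G/N\cong\R$ via the homotopy exact sequence. Your argument is more elementary --- it needs neither the lattice nor complete solvability for the splitting itself --- and it is sound: $\pi\circ\sigma=\mathrm{id}_{\R}$ forces $\sigma$ to be an injective closed section, and $(t,n)\mapsto\sigma(t)n$ is a diffeomorphism with smooth inverse $g\mapsto(\pi(g),\sigma(\pi(g))^{-1}g)$. What the paper's route buys, and yours only gestures at in the final sentence, is the compatibility $\bar{s}|_{\Z}=s$: because $\bar{s}$ extends $s$, the inclusion $\Z\ltimes_{\phi}\Gamma_{N}\hookrightarrow\R\ltimes_{\bar{\phi}}N$ automatically respects both semidirect-product structures, which is what Lemmas~\ref{Extension map} and~\ref{solbundle} use downstream. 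With your construction, $\sigma(1)$ need not equal the lifted generator $s(1)\in\Gamma$, so $\bar{\phi}$ need not restrict to $\phi$; this is easily repaired by choosing $S=\exp_{G}^{-1}(s(1))$ (legitimate since $\exp_{G}$ is a diffeomorphism for simply connected completely solvable $G$), but as written the appeal to Theorem~\ref{Saito} at the end does not quite close that gap. For the lemma as literally stated your proof suffices.
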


\begin{proof}
Since $\Gamma$ is a lattice in G, we define $\Gamma_{N}:=\Gamma\cap N.$ It is known $\Gamma_{N}$ is a lattice in N \cite{Ra}. We have the following commutative diagram with exact horizontal rows
$$
\begin{tikzcd}
    1 \arrow[r] & N \arrow[r] & G \arrow[r, "\bar{\pi}"] & G/N \arrow[r]& 1 \\
    1 \arrow[r] & \Gamma_{N} \arrow[r] \arrow[u]& \Gamma \arrow[r, "\pi"] \arrow[u]& \Gamma/\Gamma_{N} \arrow[r] \arrow[u]& 1 \\
\end{tikzcd}
$$
such that $G/N=R^{s}$ and $\Gamma/\Gamma_{N}=\Z^{s}.$ Since $dim G/N=1,$ we get $s=1.$ Since $\Gamma/\Gamma_{N}=\Z$ is free, then the lower in the diagram admits a section $s:\Z\to \Gamma.$ Hence $\Gamma\cong \Z\ltimes_{\phi}\Gamma_{N}.$ 

Since $\Gamma\subset G,$ we can uniquely extend the section $s:\Z\to\Gamma$ to a map $\bar{s}: R=G/N\to G$ by Theorem \ref{Saito}. 

We claim that $\bar{s}$ is a section to $\bar{\pi}:G\to G/N=R.$

This is because $\bar{s}\circ\bar{\pi}$ is extension map of the homomorphism $s\circ\pi:\Gamma/\Gamma_{N} \to \Gamma/\Gamma_{N}$ by the Saito theorem (Theorem \ref{Saito}) again. Since $s\circ\pi=Id|_{\Gamma/\Gamma_{N}}$ and $\bar{Id}$ is the extension of the identity homomorphism $Id:\Gamma/\Gamma_{N}\to\Gamma/\Gamma_{N},$ the uniqueness of the Saito theorem  gives us $\bar{s}\circ\bar{\pi}=\bar{Id}.$

It follows that $G=R\ltimes_{\bar{\phi}} N$ and $\bar{\pi}|_{\Gamma}=\pi.$
\end{proof}

\begin{lemm}\label{Extension map} 
Let $\Gamma=\Z\ltimes_{\phi}\Gamma_{N}$ and $\Lambda=\Z\ltimes_{\nu}\Lambda_{N'}$ be finitely generated, torsion-free almost nilpotent groups. Any epimorphism $f:\Z\ltimes_{\phi}\Gamma_{N}\to\Z\ltimes_{\nu}\Lambda_{N'}$ such that $f|_{\Z}=Id:\Z\to\Z$ can be extended uniquely to a homomorphism $\bar{f}:G\to H,$ where $G:=R\ltimes_{\bar{\phi}}N$ and $H=R\ltimes_{\bar{\nu}}N'.$
\end{lemm}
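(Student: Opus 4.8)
The plan is to obtain $\bar f$ as a direct application of Saito's extension theorem (Theorem \ref{Saito}), which extends an \emph{arbitrary} homomorphism out of a lattice in a simply-connected completely solvable Lie group. Thus the whole task reduces to verifying the hypotheses of that theorem and then identifying the extension it produces, together with its compatibility with the semidirect-product structure.

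First I would assemble the three inputs that Theorem \ref{Saito} requires. By the preceding lemma, $G=R\ltimes_{\bar\phi}N$ and $H=R\ltimes_{\bar\nu}N'$ are simply-connected Lie groups whose nilradicals $N,N'$ have codimension one; as almost nilpotent groups of $R$-type they are completely solvable, since every adjoint operator acts with real eigenvalues (the nilpotent directions contribute only the eigenvalue $0$, and the one-parameter $R$-direction is of the completely solvable type). Again by the preceding lemma, $\Gamma=\Z\ltimes_{\phi}\Gamma_{N}$ is a lattice in $G$: the subgroup $\Gamma_{N}=\Gamma\cap N$ is a lattice in $N$ and $\Gamma/\Gamma_{N}\cong\Z$ is a lattice in $G/N=R$. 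Composing $f$ with the inclusion $\Lambda\hookrightarrow H$, I regard $f$ as a homomorphism $f:\Gamma\to H$. Theorem \ref{Saito} then applies verbatim and yields a unique homomorphism $\bar f:G\to H$ with $\bar f|_{\Gamma}=f$; the uniqueness clause of the present lemma is exactly the uniqueness asserted by Theorem \ref{Saito}, so no extra argument is needed there.

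It remains to check that $\bar f$ is compatible with the two decompositions, and in particular restricts to the identity on the common $R$-factor. For this I would imitate the uniqueness trick from the preceding lemma. Since the nilradical is characteristic and $f$ is an epimorphism, Proposition \ref{nil} gives $f(\Gamma_{N})\subseteq\Lambda_{N'}$, so the projection $q:H\to H/N'=R$ satisfies $q\circ f=\pi$ on $\Gamma$, where $\pi:\Gamma\to\Gamma/\Gamma_{N}=\Z\hookrightarrow R$; here I use $f|_{\Z}=Id$. The homomorphism $q\circ\bar f:G\to R$ extends $q\circ f=\pi$, and the quotient map $\bar\pi:G\to G/N=R$ from the preceding lemma also extends $\pi$. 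By the uniqueness half of Theorem \ref{Saito} these two extensions coincide, so $q\circ\bar f=\bar\pi$; equivalently $\bar f(N)\subseteq N'$ and $\bar f$ induces the identity on $R=G/N=H/N'$. This exhibits $\bar f$ as a homomorphism $R\ltimes_{\bar\phi}N\to R\ltimes_{\bar\nu}N'$ covering $Id_{R}$, as desired.

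The step I expect to be the genuine obstacle is the first one, namely confirming that $G$ and $H$ are completely solvable, which is precisely what licenses the use of Theorem \ref{Saito}. Once complete solvability and the lattice property are in hand, both the existence and the uniqueness of $\bar f$ are immediate consequences of Saito's theorem, and the compatibility with the splittings is a formal consequence of that same uniqueness. In other words, the mathematical weight of the lemma sits entirely in recognizing that $\Gamma$ and $\Lambda$ are lattices in the completely solvable groups $G$ and $H$; the extension itself is then automatic.
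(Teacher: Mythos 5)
Your proposal is correct and follows essentially the same route as the paper: extend $f$ (composed with the inclusion $\Lambda\hookrightarrow H$) directly by Saito's theorem, then use its uniqueness clause to see that $\bar f$ covers the identity on the common $R$-factor (the paper phrases this check via the section $s_{\Gamma}$, you via the quotient $H\to H/N'$, which amounts to the same computation). The one soft spot --- your eigenvalue argument that $G$ and $H$ are completely solvable is only heuristic, since the $\R$-action on $\mathfrak{n}$ could a priori contribute complex eigenvalues --- is an assumption the paper also makes silently when it invokes Saito's theorem, so it does not separate your argument from the paper's.
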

\begin{proof}
    We have the following commutative diagram where the rows are short exact sequence 
$$
\begin{tikzcd}
1 \arrow[r] & N \arrow[r] & G \arrow[r, "\bar{\pi_{\Gamma}}"] & G/N=R \arrow[r]& 1 \\
1 \arrow[r] & \Gamma_{N} \arrow[r] \arrow[u] \arrow[d, "f|"]& \Gamma \arrow[r, "\pi_{\Gamma}"] \arrow[u] \arrow[d, "f"]& \Gamma/\Gamma_{N}=\Z \arrow[r] \arrow[u] \arrow[d, "Id"]& 1 \\
1 \arrow[r] & \Lambda_{N'} \arrow[r] \arrow[d]& \Lambda \arrow[r, "\pi_{\Lambda}"] \arrow[d]& \Lambda/\Lambda_{N}=\Z \arrow[r] \arrow[d]& 1 \\
1 \arrow[r] & N' \arrow[r] & H \arrow[r, "\bar{\pi_{\Lambda}}"] & H/N'=R \arrow[r] & 1 \\
\end{tikzcd}
$$
Since $\Lambda\subset H,$ the epimorphism $f:\Gamma\to\Lambda$ extends the uniquely $\bar{f}:G\to H$ by the Saito theorem.  Similarly $Id:\Z\to \Z$ extends to a map $\bar{Id}:R\to R.$ Since $Id=\pi_{\Lambda}\circ f\circ s_{\Gamma},$ we get $\bar{Id}=\bar{\pi_{\Lambda}}\circ\bar{f}\circ \bar{s_{\Gamma}}$ by applying the Saito theorem twice. 

By construction, $\bar{f}$ restricted to discrete group $\Gamma$ brings the epimorphism $f:\Gamma\to\Lambda.$

\end{proof}

\begin{lemm}\label{solbundle}
Let $\Gamma=\Z\ltimes_{\phi}\Gamma_{N}$ and $\Lambda=\Z\ltimes_{\nu}\Lambda_{N'}$ be finitely generated, torsion-free almost nilpotent groups. Then every epimorphism $f:\Z\ltimes_{\phi}\Gamma_{N}\to\Z\ltimes_{\nu}\Lambda_{N'}$ such that $f|_{\Z}=Id:\Z\to\Z$ can be realized as a locally trivial bundle of solvmanifolds with the fiber a nilmanifold.
\end{lemm}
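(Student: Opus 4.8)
The plan is to pass from the algebraic epimorphism $f$ to the induced smooth map of solvmanifolds via the Lie group extension $\bar{f}\colon G\to H$ supplied by Lemma~\ref{Extension map}, and then to exhibit local triviality directly. First I would record that $\bar{f}$ is surjective: its image is a connected subgroup containing the cocompact lattice $\Lambda=\bar{f}(\Gamma)$, hence all of $H$. Writing $K=\ker\bar{f}$, the relation $\bar{f}(\gamma)=f(\gamma)\in\Lambda$ for $\gamma\in\Gamma$ shows that $\bar{f}$ descends to a well-defined map $\bar{f}_{*}\colon G/\Gamma\to H/\Lambda$, $g\Gamma\mapsto\bar{f}(g)\Lambda$, between the two solvmanifolds. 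The goal is then to prove that $\bar{f}_{*}$ is a locally trivial bundle with fiber $K/(K\cap\Gamma)$.

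Next I would analyze the fiber. From Lemma~\ref{Extension map}, the uniqueness clause of Theorem~\ref{Saito} gives $\bar{\pi}_{\Lambda}\circ\bar{f}=\bar{\pi}_{\Gamma}$, so every $g\in K$ satisfies $\bar{\pi}_{\Gamma}(g)=\bar{\pi}_{\Lambda}(e)=e$; hence $K\subseteq N$ and $K$ is a closed connected subgroup of the simply-connected nilpotent group $N$, and is therefore itself simply-connected nilpotent. Moreover $\Gamma\cap K=\Gamma\cap N\cap K=\ker\left(f|\colon\Gamma_{N}\to\Lambda_{N'}\right)$, which is finitely generated, torsion-free, and nilpotent. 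Since $f|$ carries the lattice $\Gamma_{N}$ onto the lattice $\Lambda_{N'}$, the extension $\bar{f}|\colon N\to N'$ is rational with respect to the $\Q$-structure determined by $\Gamma_{N}$, so its kernel $K$ is a rational subgroup and $\Gamma_{K}:=\Gamma\cap K$ is a lattice in $K$ (Mal'cev; see~\cite{Ra}). Thus $K/\Gamma_{K}$ is a closed nilmanifold, the candidate fiber.

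Finally I would establish local triviality. Because $H$ is diffeomorphic to a Euclidean space, the surjective homomorphism $\bar{f}\colon G\to H$ is a trivial principal $K$-bundle and admits a global smooth section $\sigma\colon H\to G$ with $\bar{f}\circ\sigma=\mathrm{id}$. Fixing $h_{0}\Lambda\in H/\Lambda$, I would choose an open $V\ni h_{0}$ in $H$ so small that $p_{H}|_{V}\colon V\to U:=p_{H}(V)$ is a homeomorphism and the right translates $V\lambda$ $(\lambda\in\Lambda)$ are pairwise disjoint. Using $\sigma$ and normality of $K$, the map $(v,k)\mapsto\sigma(v)k$ is a diffeomorphism $V\times K\xrightarrow{\ \sim\ }\bar{f}^{-1}(V)$ covering $\bar{f}$. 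Choosing $\gamma_{\lambda}\in\Gamma$ with $f(\gamma_{\lambda})=\lambda$ yields $\bar{f}^{-1}(V\lambda)=\bar{f}^{-1}(V)\gamma_{\lambda}$, whence $\bar{f}_{*}^{-1}(U)=p_{G}\left(\bar{f}^{-1}(V)\right)$. A direct check shows that two points of $\bar{f}^{-1}(V)$ have the same image in $G/\Gamma$ exactly when they differ by right multiplication by an element of $\Gamma_{K}$, so $p_{G}$ restricts to the quotient by the right $\Gamma_{K}$-action; transporting this action through the diffeomorphism above it becomes $(v,k)\mapsto(v,k\gamma)$, and therefore $\bar{f}_{*}^{-1}(U)\cong V\times(K/\Gamma_{K})\cong U\times(K/\Gamma_{K})$, compatibly with $\bar{f}_{*}$ and the projection to $U$.

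The main obstacle is the fiber analysis rather than the covering-space bookkeeping: one must verify that $\Gamma_{K}=\ker f$ is genuinely a lattice in $K=\ker\bar{f}$, equivalently that the Mal'cev extension $\bar{f}|$ is rational, since this is exactly what guarantees the fiber $K/\Gamma_{K}$ is a compact nilmanifold rather than a noncompact quotient. A secondary care point is matching the deck-transformation identifications inside $\bar{f}^{-1}(V)$ to exactly $\Gamma_{K}$, which is where the disjointness of the translates $V\lambda$ and the surjectivity of $f$ are used.
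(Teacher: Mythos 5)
Your proposal is correct and follows essentially the same route as the paper: extend $f$ to $\bar{f}\colon G\to H$ via Lemma~\ref{Extension map}, identify the fiber as the compact nilmanifold $K/\ker(f|)$, and verify local triviality by even-covering a small neighborhood in $H/\Lambda$ and tracking the $\Gamma$-identifications. If anything, you are more explicit than the paper about the one genuinely delicate point --- that $\ker f$ is a lattice in $K=\ker\bar{f}$ (equivalently, that $K$ is the Mal'cev completion of $\ker(f|_{\Gamma_N})$), which the paper asserts by fiat when it writes the fiber as $K/\pi$.
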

\begin{proof}
By Lemma \ref{Extension map}, the kernel of epimorphism $f:\Z\ltimes_{\phi}\Gamma_{N}\to\Z\ltimes_{\nu}\Lambda_{N'}$ such that $f|_{\Z}=Id:\Z\to\Z$ equals the kernel of epimorphism $f|:\Gamma_{N}\to\Lambda_{N'}.$ Similarly, $ker(\bar{f})=ker(\bar{f}|_{N}).$ Let us denote $K:=ker(\bar{f}|_{N})$ and $\pi:=ker(f|_{\Gamma_{N}}).$ Note that $\pi$ is a torsion-free, finitely generated nilpotent group since it is a subgroup of finitely generated, torsion-free nilpotent group $\Gamma_{N}.$ By Mal'cev Theorem ~\cite{Ma2}, $\pi$ can be realized the fundamental group of a nilmanifold $K/\pi,$ where $K$ is a simply connected, nilpotent Lie group, i.e. the Mal'cev completion of the torsion-free nilpotent group $\pi$. 
  Since $G, H$, and $K$ are simply connected completely solvable Lie groups,
  we can apply the Saito theorem to obtain the following commutative diagram   
$$
\begin{tikzcd}
 & K/\pi \arrow[r] & G/\Gamma \arrow[r, "\hat{f}"]& H/\Lambda & \\
1 \arrow[r]  & K \arrow[u, "p_{\pi}"] \arrow[r]& G \arrow[r, "\bar{f}"]  \arrow[u, "p_{\Gamma}"] & H \arrow[r] \arrow[u, "p_{\Lambda}"]& 1 \\
1 \arrow[r] & \pi \arrow[r] \arrow[u]& \Gamma \arrow[r, "f"] \arrow[u]& \Lambda \arrow[r] \arrow[u]& 1. \\
\end{tikzcd}
$$
Here $G=R\ltimes_{\bar{\phi}}N$ and $H=R\ltimes_{\bar{\nu}}N'$ are connected, simply connected almost nilpotent Lie groups whereas $N$ and $N'$ are their nilradicals. 

We claim that the solvmanifold $G/\Gamma$ is a total space of a locally trivial bundle of solvmanifold $H/\Lambda$ with a fiber nilmanifold $K/\pi.$

Since G is the principal $K$-bundle over $H$ \cite{Co} and the projection $p_{\Lambda}: H\to H/\Lambda$ is the universal covering map, we can pick a sufficiently small neighborhood $U$ of a point $x\in H/\Lambda$ such that $p^{-1}_{\Lambda}(U)$ is evenly covered by $\{\bar{U_\lambda}\}_{\lambda \in \Lambda}$. In addition, we require that for each $\lambda$, $\bar{U_\lambda} \times K$ is a local trivialization of the fiber bundle $K \to G \to H$. Let $$X:=\underset{\lambda\in\Lambda}{\coprod}\bar{U_{\lambda}}\times K.$$ Note that the action of the group $\Gamma$ on $X$ translates the summands. The orbit space $X/\Gamma$ is homeomorphic to $$(X/\pi)/\Lambda=\left(\left(\underset{\lambda\in \Lambda}{\coprod}\bar{U}_{\lambda}\times K\right)/\pi\right)/\Lambda=\left(\underset{\lambda\in \Lambda}{\coprod}\bar{U}_{\lambda}\times K/\pi\right)/\Lambda\cong U\times K/\pi.$$ Hence, the preimage $\hat{f}^{-1}(U)$, is homeomorphic to $U\times K/\pi.$

Since  the point $x\in H/\Lambda$ is arbitrary, we get the solvmanifold $G/\Gamma$ is the locally trivial fiber bundle over $H/\Lambda$ with fiber $K/\pi.$

\end{proof}

\begin{cor}\cite{DK}\label{nilbundle}
 Let $\Gamma$ and $\Lambda$ be finitely generated, torsion-free nilpotent groups. Then every epimorphism $\phi:\Gamma\rightarrow \Lambda$ can be realized as a locally trivial bundle of nilmanifolds with the fiber a nilmanifold.
\end{cor}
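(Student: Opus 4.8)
The plan is to mirror the proof of Lemma \ref{solbundle}, working directly with nilpotent Lie groups rather than with almost nilpotent ones, where the argument is in fact cleaner because there is no semidirect $\Z\ltimes$ factor to track. Set $\pi=\ker\phi$. Since $\pi$ is a subgroup of the finitely generated, torsion-free nilpotent group $\Gamma$, it is itself finitely generated, torsion-free, and nilpotent. By the Mal'cev Theorem, $\Gamma$, $\Lambda$, and $\pi$ are lattices in their respective Mal'cev completions $G$, $H$, $K$, which are simply connected nilpotent Lie groups, and the quotients $G/\Gamma$, $H/\Lambda$, $K/\pi$ are nilmanifolds realizing the Eilenberg--MacLane spaces $K(\Gamma,1)$, $K(\Lambda,1)$, $K(\pi,1)$ respectively.

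First I would lift $\phi$ to the Lie group level. Since every nilpotent Lie group is completely solvable, Theorem \ref{Saito} applies, and the homomorphism $\phi:\Gamma\to H$ extends uniquely to a Lie group homomorphism $\bar\phi:G\to H$. One checks that $\bar\phi$ is surjective, its image being a closed connected subgroup of $H$ that contains the lattice $\Lambda$ and hence is all of $H$, and that $\ker\bar\phi=K$ is precisely the Mal'cev completion of $\pi$. This produces the commutative diagram of short exact sequences
$$
\begin{tikzcd}
1 \arrow[r] & K \arrow[r] & G \arrow[r, "\bar\phi"] & H \arrow[r] & 1 \\
1 \arrow[r] & \pi \arrow[r] \arrow[u] & \Gamma \arrow[r, "\phi"] \arrow[u] & \Lambda \arrow[r] \arrow[u] & 1
\end{tikzcd}
$$
with $K$ simply connected and $\pi\leq K$ a lattice.

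Next I would descend $\bar\phi$ to a map $\hat\phi:G/\Gamma\to H/\Lambda$ and show it is a locally trivial bundle with fiber $K/\pi$. Here I would repeat the covering-space argument of Lemma \ref{solbundle} essentially verbatim: since $G\to H$ is a principal $K$-bundle and $p_\Lambda:H\to H/\Lambda$ is the universal cover, choose a small neighborhood $U\ni x$ in $H/\Lambda$ whose preimage is evenly covered by sheets $\{\bar U_\lambda\}_{\lambda\in\Lambda}$ over which the bundle $K\to G\to H$ trivializes, form $X=\coprod_{\lambda\in\Lambda}\bar U_\lambda\times K$, and compute $\hat\phi^{-1}(U)\cong X/\Gamma\cong\bigl((X/\pi)/\Lambda\bigr)\cong U\times K/\pi$. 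As $x$ is arbitrary, $\hat\phi$ is a locally trivial bundle of nilmanifolds with fiber the nilmanifold $K/\pi$.

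The main obstacle is the kernel identification $\ker\bar\phi=K$ rather than some larger closed subgroup, and correspondingly the surjectivity of $\bar\phi$. I expect this to follow from dimension counting together with the lattice correspondence: $\ker\bar\phi$ is a closed connected subgroup whose intersection with $\Gamma$ is exactly $\pi=\ker\phi$, and $\dim\ker\bar\phi=\dim G-\dim H=\rank\Gamma-\rank\Lambda=\rank\pi=\dim K$, so $\ker\bar\phi$ and $K$ are simply connected nilpotent Lie groups of equal dimension sharing the lattice $\pi$, forcing them to coincide. Once the diagram is established, the local triviality step is formally identical to the almost nilpotent case, so I anticipate no additional difficulty there.
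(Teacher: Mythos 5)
Your proposal is correct and takes essentially the same route as the paper: Corollary \ref{nilbundle} is quoted from \cite{DK}, and the argument there (and in the proof of Lemma \ref{solbundle}, which you mirror) is exactly what you describe --- Mal'cev completions, the Saito extension, identification of $\ker\bar\phi$ with the completion $K$ of $\pi$ via Hirsch-length/dimension counting, and the same covering-space trivialization of the quotient map. No gaps worth flagging.
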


\begin{thm}{\label{almost nilpotent}}
Let $f:\Z\ltimes_{\phi}\Gamma_{N}\to\Z\ltimes_{\nu}\Lambda_{N'}$ 
be an epimorphism between torsion-free almost nilpotent groups such that $f|_{\Z}=Id.$ Then $$\cat(f)=\cd(f)=\cd(\Lambda_{N'})+1.$$
\end{thm}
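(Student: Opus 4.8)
The plan is to establish the chain of (in)equalities $\cd(f) \le \cat(f)$, then $\cat(f) \le \cd(\Lambda_{N'})+1$, and finally $\cd(\Lambda_{N'})+1 \le \cd(f)$, so that all three quantities coincide. The leftmost inequality $\cd(f)\le\cat(f)$ is the universal cup-length lower bound that holds for every group homomorphism, as recalled in the introduction, so nothing new is needed there. The substance of the theorem lies in the two remaining estimates, and the geometric input assembled in the preliminaries is designed precisely to feed them.

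For the upper bound $\cat(f)\le \cd(\Lambda_{N'})+1$, I would first note that $\Z\ltimes_\phi\Gamma_N$ and $\Z\ltimes_\nu\Lambda_{N'}$ are fundamental groups of the closed aspherical solvmanifolds $G/\Gamma$ and $H/\Lambda$, so $f$ is induced by a map of these $K(\pi,1)$'s. By Lemma~\ref{solbundle}, $f$ is realized as a locally trivial bundle $\hat f\colon G/\Gamma\to H/\Lambda$ of solvmanifolds with nilmanifold fiber $K/\pi$. The base $H/\Lambda$ is a closed aspherical manifold of dimension $n=\cd(\Lambda)=\cd(\Lambda_{N'})+1$ (the $+1$ coming from the $\Z$-extension; this dimension count should be recorded explicitly). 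A standard fact is that for a fibration over a base $B$, one has $\cat(\hat f)\le \cat(B)$, and since $B=H/\Lambda=B\Lambda$ is a group, $\cat(B)=\cat(\Lambda)=\cd(\Lambda)=\cd(\Lambda_{N'})+1$ by Eilenberg--Ganea. Thus $\cat(f)=\cat(\hat f)\le \cd(\Lambda_{N'})+1$.

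For the lower bound $\cd(\Lambda_{N'})+1\le \cd(f)$, the key is Lemma~\ref{compact support}: since $\hat f\colon G/\Gamma\to H/\Lambda$ is a locally trivial bundle of closed aspherical manifolds with compact connected fiber the nilmanifold $K/\pi$, the induced homomorphism $\hat f^{*}\colon H^{n}(H/\Lambda;\Z\Lambda)\to H^{n}(G/\Gamma;\Z\Lambda)$ is nonzero, where $n=\cd(\Lambda)=\cd(\Lambda_{N'})+1$. Reinterpreting this in the language of group cohomology, $H^{n}(\Lambda;\Z\Lambda)\to H^{n}(\Gamma;\Z\Lambda)$ is a nonzero induced map with $\Z\Lambda$-coefficients, which exhibits a module $M=\Z\Lambda$ witnessing $\cd(f)\ge n=\cd(\Lambda_{N'})+1$. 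Combined with the upper bound and the universal inequality, this pins down $\cat(f)=\cd(f)=\cd(\Lambda_{N'})+1$.

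The main obstacle I anticipate is the upper-bound step, specifically justifying $\cat(\hat f)\le\cat(\text{base})$ cleanly in this setting: one must produce an open cover of $H/\Lambda$ by $\cat(\Lambda)+1$ sets over each of which $\hat f$ is trivial enough that the restrictions $\hat f|_{\hat f^{-1}(U_i)}$ are nullhomotopic, which uses both local triviality of the bundle and the fact that $B\Lambda$ admits a categorical open cover realizing $\cat(\Lambda)$. The bookkeeping of the dimension identity $\cd(\Lambda)=\cd(\Lambda_{N'})+1$ via the short exact sequence $1\to\Lambda_{N'}\to\Lambda\to\Z\to1$ is routine but must be stated, and verifying that Lemma~\ref{compact support} applies requires only checking that the fiber $K/\pi$ is a closed nilmanifold, which follows from $\pi$ being finitely generated torsion-free nilpotent as established in the proof of Lemma~\ref{solbundle}.
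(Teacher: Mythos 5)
Your proposal is correct and takes essentially the same route as the paper: the lower bound comes from realizing $f$ as a locally trivial bundle of solvmanifolds via Lemma~\ref{solbundle} and applying Lemma~\ref{compact support}, and the upper bound is the dimension bound $\cat(f)\le\cat(B\Lambda)=\dim(H/\Lambda)=\cd(\Lambda_{N'})+1$. The ``main obstacle'' you anticipate is not actually one: $\cat(f)\le\cat(Y)$ holds for an arbitrary map $f:X\to Y$ (pull back a categorical cover of $Y$; each restriction of $f$ factors through a subset that is nullhomotopic in $Y$), so no local triviality of the bundle is needed for the upper bound.
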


\begin{proof}
It is clear by dimensional reason that $$\cd(f)\leq\cat(f)\leq \cd(\Lambda_{N'})+1.$$    

We prove that $\cd(f)=\cd(\Lambda_{N'})+1.$ By Lemma \ref{compact support}, it suffices to show that $f$ can be realized as a fiber bundle over a closed aspherical manifold with compact fiber. Indeed, by Lemma \ref{solbundle} the map $f$ can be realized as a locally trivial fiber bundle over solvmanifold $(R\rtimes_{\bar{\nu}} N')/(\Z\rtimes_{\nu}\Lambda_{N'})$ with fiber nilmanifold $K/\pi$.  This finishes the proof of the theorem. 
\end{proof}

\section{Homomorphism of virtually nilpotent groups}

Let $f:M\to N$ be a map between aspherical manifolds with the fundamental groups $\Gamma$ and $\Lambda$ respectively. We denote by $\phi:\Gamma\to\Lambda$ the induced homomorphism   $\phi:=f_{*}:\pi_{1}(M)\to\pi_{1}(N)$. 

\begin{lemm}{\label{DNE}}
There are no maps from a nilmanifold to a infranilmanifold that induces an epimorphisms of the fundamental groups. 
\end{lemm}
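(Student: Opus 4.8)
The plan is to argue by contradiction, extracting a numerical obstruction from the assumption that such an epimorphism exists. Suppose $f:M\to X$ is a map from a nilmanifold $M$ with $\pi_1(M)=\Gamma$ to an infranilmanifold $X$ with $\pi_1(X)=\Lambda$, and that $\phi:=f_*:\Gamma\to\Lambda$ is surjective. Recall that $\Gamma$ is then a finitely generated torsion-free nilpotent group, while $\Lambda$ is virtually nilpotent but not nilpotent (assuming $X$ is a genuine infranilmanifold and not a nilmanifold). The first step is to pass to the maximal nilpotent normal subgroup: let $\Lambda'\trianglelefteq\Lambda$ be the nilpotent subgroup of finite index guaranteed by the infranilmanifold structure (finitely covered by the nilmanifold $G/\Lambda\cap G$), which we may take normal by Lemma~\ref{normal}. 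Set $[\Lambda:\Lambda']=d>1$, so $\Lambda/\Lambda'$ is a nontrivial finite group.

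The key observation is that a finitely generated \emph{nilpotent} group has no nontrivial finite quotient that it cannot already ``see'' through its abelianization, and more importantly, the image of a nilpotent group under any homomorphism is nilpotent. I would form the composite $\Gamma\xrightarrow{\phi}\Lambda\xrightarrow{q}\Lambda/\Lambda'$ where $q$ is the quotient onto the finite group $\Lambda/\Lambda'$. Since $\phi$ is surjective and $q$ is surjective, the composite $q\circ\phi:\Gamma\to\Lambda/\Lambda'$ is a surjection onto a nontrivial finite group. On the other hand, the image of the torsion-free nilpotent group $\Gamma$ under $q\circ\phi$ must be a nilpotent group; a nontrivial finite nilpotent group is permissible in general, so this alone is not yet a contradiction, and the main obstacle will be to rule out the existence of this surjection by a more refined invariant.

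The cleaner route, which I would pursue, uses cohomological dimension together with asphericity. Both $M^m$ and $X^n$ are closed aspherical manifolds, so $\cd(\Gamma)=m=\dim M$ and $\cd(\Lambda)=n=\dim X$ exactly, by the Eilenberg--Ganea equality and Poincar\'e duality (torsion-free virtually nilpotent groups are duality groups of dimension equal to the manifold dimension). Because $\phi$ is an epimorphism, the infranilmanifold $X$ and the nilmanifold $M$ must be compatible in dimension; but an epimorphism of this form, realized geometrically, should force $X$ itself to be a nilmanifold. Concretely, I would lift $f$ to the finite cover: the finite-index subgroup $\phi^{-1}(\Lambda')\leq\Gamma$ has finite index, but a finitely generated torsion-free nilpotent group has \emph{no} proper finite-index subgroup arising as the kernel-complement of a surjection to a nontrivial finite group unless... --- here is the crux. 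The decisive fact is that $\Gamma$ being nilpotent forces $\phi(\Gamma)$ to be nilpotent, hence $\phi(\Gamma)\subseteq$ the unique maximal nilpotent normal subgroup is false in general, so instead one shows $\Lambda=\phi(\Gamma)$ is nilpotent, contradicting that $X$ is a \emph{non-nilmanifold} infranilmanifold (i.e. $\Lambda$ is not nilpotent).

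Thus the heart of the argument is the elementary but essential claim: \emph{the surjective homomorphic image of a nilpotent group is nilpotent}. Since $\phi:\Gamma\to\Lambda$ is an epimorphism and $\Gamma$ is nilpotent, $\Lambda\cong\Gamma/\ker\phi$ is nilpotent. But the fundamental group of a genuine infranilmanifold (one that is not itself a nilmanifold) is virtually nilpotent and \emph{not} nilpotent, since it surjects onto the nontrivial finite holonomy group $\Lambda/\Lambda'$ and a nilpotent group admitting such a structure would have to have $\Lambda=\Lambda'$. This yields the contradiction. The step I expect to be most delicate is confirming that "infranilmanifold" in the statement is meant to exclude nilmanifolds (otherwise the identity map is a trivial counterexample); I would make this precise by noting that the holonomy group $\Lambda/(\Lambda\cap G)$ is nontrivial for a non-nilmanifold infranilmanifold, so $\Lambda$ is non-nilpotent, and a non-nilpotent group cannot be a surjective image of a nilpotent group.
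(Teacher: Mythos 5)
Your proposal, after some exploratory detours, lands on exactly the paper's argument: the surjective image of a nilpotent group is nilpotent (Proposition~\ref{nil}), so $\Lambda$ would be nilpotent, contradicting that the fundamental group of a genuine (non-nilmanifold) infranilmanifold is virtually nilpotent but not nilpotent. You even make explicit the convention --- that ``infranilmanifold'' here must exclude nilmanifolds --- which the paper's proof leaves implicit.
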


\begin{proof}
Suppose there is a map $f$ from nilmanifold $N$ to infranilmanifold $M$ that induces an epimorphism $\phi:\pi_{1}(N)\to\pi_{1}(M).$ Since $\pi_{1}(N)$ is a nilpotent group, the image $\phi(\pi_{1}(N))$ is a nilpotent group by Proposition ~\ref{nil}. Since $\phi$ is surjective, $\pi_{1}(M)$ must be nilpotent. This is a contradiction since $\pi_{1}(M)$ is a virtually nilpotent group. 
\end{proof}

\begin{thm}\label{infranil}
    Let $f:M^{m}\to N^{n}$ be a map between infranilmanifolds  that induces an epimorphism $\phi:\pi_{1}(M)\to\pi_{1}(N).$ Then $\cat(\phi)=\cd(\phi)=n.$ 
\end{thm}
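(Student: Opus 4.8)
The plan is to establish the two outer inequalities $n\le\cd(\phi)$ and $\cat(\phi)\le n$ and squeeze, since $\cd(\phi)\le\cat(\phi)$ always holds by the cup-length lower bound. Write $\Gamma=\pi_{1}(M)$ and $\Lambda=\pi_{1}(N)$. The upper bound is immediate: a map $f\colon M\to N$ realizing $\phi$ satisfies $\cat(f)\le\cat(N)$, because any categorical open cover of $N$ pulls back to a cover of $M$ with nullhomotopic restrictions of $f$; and $\cat(N)=\cat(\Lambda)=\cd(\Lambda)=n$ by the Eilenberg--Ganea equality together with the fact that the fundamental group of a closed aspherical $n$-manifold is a Poincar\'e duality group of dimension $n$. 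Thus $\cat(\phi)\le n$, and everything reduces to the lower bound $\cd(\phi)\ge n$, which I would obtain by showing that $f^{*}\colon H^{n}(N;\Z\Lambda)\to H^{n}(M;\Z\Lambda)$ is nonzero.

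To access the Main Lemma I would first descend to nilmanifold covers, where Corollary~\ref{nilbundle} applies. Let $\Gamma_{0}\le\Gamma$ be the maximal normal nilpotent subgroup of finite index (the translation lattice of the infranilmanifold $M$), so the corresponding cover $p\colon M_{0}\to M$ is a nilmanifold, and set $\Lambda_{0}:=\phi(\Gamma_{0})$. Then $\Lambda_{0}$ is a finitely generated, torsion-free nilpotent subgroup of finite index in $\Lambda$; the associated cover $q\colon N_{0}\to N$ is a nilmanifold with $\cd(\Lambda_{0})=\cd(\Lambda)=n$ by Serre's invariance of cohomological dimension under finite-index subgroups. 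Since $\phi(\Gamma_{0})=\Lambda_{0}=\pi_{1}(N_{0})$, the composite $f\circ p$ lifts to a map $f_{0}\colon M_{0}\to N_{0}$ with $q\circ f_{0}\simeq f\circ p$ and $(f_{0})_{*}=\phi|_{\Gamma_{0}}\colon\Gamma_{0}\twoheadrightarrow\Lambda_{0}$. By Corollary~\ref{nilbundle} I may take $f_{0}$ to be a locally trivial bundle of nilmanifolds with nilmanifold fiber, whence Lemma~\ref{compact support} gives that $f_{0}^{*}\colon H^{n}(N_{0};\Z\Lambda_{0})\to H^{n}(M_{0};\Z\Lambda_{0})$ is nonzero; as $H^{n}(N_{0};\Z\Lambda_{0})\cong\Z$ (Poincar\'e duality again), this map is injective.

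The final step is a transfer argument to push this back to $\Z\Lambda$-coefficients on $M$ and $N$. From $q\circ f_{0}\simeq f\circ p$ I get $p^{*}f^{*}=f_{0}^{*}q^{*}$ on $H^{n}(-;\Z\Lambda)$, so it suffices to prove $f_{0}^{*}q^{*}\ne 0$. First, $q^{*}\colon H^{n}(N;\Z\Lambda)\to H^{n}(N_{0};\Z\Lambda)$ is injective: the cohomology transfer for the finite cover $q$ satisfies $\mathrm{tr}\circ q^{*}=\deg(q)\cdot\mathrm{id}$, and $H^{n}(N;\Z\Lambda)\cong\Z$ is torsion-free. Second, because $\phi(\Gamma_{0})=\Lambda_{0}$, the $\Gamma_{0}$-action on $\Z\Lambda$ (via $\phi$) factors through $\Lambda_{0}$, and the restriction of $\Z\Lambda$ to $\Lambda_{0}$ is a free $\Z\Lambda_{0}$-module of rank $[\Lambda:\Lambda_{0}]$. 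Hence $f_{0}^{*}$ with $\Z\Lambda$-coefficients is a finite direct sum of copies of $f_{0}^{*}$ with $\Z\Lambda_{0}$-coefficients, and is therefore injective by the previous paragraph. Composing, $f_{0}^{*}q^{*}$ is injective, hence nonzero, so $f^{*}\ne 0$ and $\cd(\phi)\ge n$. Combining with the outer inequalities yields $\cat(\phi)=\cd(\phi)=n$.

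I expect the main obstacle to lie in the coefficient bookkeeping of the last step rather than in the geometry: one must guarantee both that the top class of the base $N$ survives pullback to the cover $N_{0}$ (the transfer injectivity of $q^{*}$) and that passing from $\Z\Lambda_{0}$ to $\Z\Lambda$ preserves the injectivity supplied by the Main Lemma. Both hinge on the single identity $\phi(\Gamma_{0})=\Lambda_{0}$, which makes $\Z\Lambda$ a \emph{free} $\Z\Lambda_{0}$-module equivariantly on both sides; without surjectivity onto $\Lambda_{0}$ the restriction would only be a summand of a free module and the direct-sum splitting of $f_{0}^{*}$ could fail, which is exactly why I take $\Lambda_{0}$ to be the image $\phi(\Gamma_{0})$ rather than a preassigned nilmanifold-cover subgroup of $\Lambda$. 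A secondary point to verify carefully is that $\Lambda_{0}=\phi(\Gamma_{0})$ really is the fundamental group of an $n$-dimensional nilmanifold finitely covering $N$, for which one combines Serre's theorem with the classification of infranilmanifold groups as the torsion-free, finitely generated, virtually nilpotent groups.
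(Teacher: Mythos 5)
Your proposal is correct, but it is organized quite differently from the paper's proof. The paper argues in two stages: first the case where the target is a nilmanifold (restricting $f$ to a finite-index nilpotent normal subgroup $\Gamma'\le\Gamma$ and applying Corollary~\ref{nilbundle} and Lemma~\ref{compact support} directly to $h_*:\Gamma'\to\Lambda$), and then the general infranilmanifold target, which it reduces to the first case via Shapiro's lemma, the identification $\mathrm{Ind}=\mathrm{Coind}$ for the finite-index subgroup $\Lambda'\le\Lambda$, an explicit coefficient homomorphism $\beta:\Z\Lambda\to\Z\Lambda'$, and the pullback $f^*(B\Lambda')$. You instead pass to nilmanifold covers of \emph{both} source and target simultaneously, crucially taking $\Lambda_0:=\phi(\Gamma_0)$ so that the covered map is genuinely surjective on $\pi_1$, apply the Main Lemma there, and descend by combining transfer injectivity of $q^*$ (using $H^n(\Lambda;\Z\Lambda)\cong\Z$) with the decomposition of $\Z\Lambda$ as a free $\Z\Lambda_0$-module. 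The two descents are really the same algebra in different clothing --- your direct-sum splitting of $\Z\Lambda$ over $\Z\Lambda_0$ is exactly the module-level content of the paper's Shapiro-lemma step --- but your choice $\Lambda_0=\phi(\Gamma_0)$ is a genuine improvement in precision: the paper's Step 1 asserts that $h_*:\Gamma'\to\Lambda$ is an epimorphism, which need not hold (e.g.\ for the Klein bottle group mapping onto $\Z$, the translation lattice maps onto $2\Z$), whereas your formulation only ever applies Corollary~\ref{nilbundle} to a map that is surjective by construction and then pays for this with the transfer argument. The price you pay is needing Poincar\'e duality twice (torsion-freeness of $H^n(N;\Z\Lambda)$ and injectivity rather than mere nonvanishing of $f_0^*$), neither of which is an obstacle here.
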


\begin{proof} Using the well-known inequalities on cat ~\cite{CLOT} we obtain: 
$$\cat(\phi)\leq \min\{\cat(M),\cat{N}\}\leq \dim N=n.$$ 
Since $\cd(\phi)\leq\cat(\phi),$ to obtain the equality $\cd(\phi)=\cat(\phi),$ it suffices to show that $\cd(\phi)=n.$ We consider two steps: 

{\em Step 1.} Let $B\Lambda$ be a nilmanifold.  

Since $\Gamma$ is a virtually nilpotent group, there exists nilpotent subgroup $\Gamma'$ such that the index $|\Gamma:\Gamma'|$ is finite. By Lemma \ref{normal}, we can assume $\Gamma'$ to be normal. We have the following commutative diagrams:
$$
\begin{tikzcd}
\Gamma' \arrow[d, "i_{*}"] \arrow[dr,"h_{*}"]
&  \\
\Gamma \arrow[r,"\phi"] & \Lambda
\end{tikzcd}\ \ \ \ \ \ \ \ \ \ \ \ \ \ \
\begin{tikzcd}
B\Gamma' \arrow[d, "i"] \arrow[dr,"h"]
&  \\
B\Gamma \arrow[r,"f"] & B\Lambda
\end{tikzcd}
$$ where $h:=f|_{B\Gamma'}.$

Since the map $h: B\Gamma'\to\Lambda$ induces an epimorphism $h_{*}:\Gamma'\to\Lambda,$ by Corollary ~\ref{nilbundle} there is a fiber bundle 
of nilmanifolds $B\Gamma'\to B\Lambda$ with a compact fiber. By Lemma~\ref{compact support}, $\cd(h_{*})=n$.
$$
\begin{tikzcd}
H^{n}(B\Gamma';\Z\Lambda) 
&  \\
H^{n}(B\Gamma;\Z\Lambda) \arrow[u,"i^{*}"] & H^{n}(B\Lambda;\Z\Lambda) \arrow[l, "f^{*}"] \arrow[lu, "h^{*}"]
\end{tikzcd}
$$
We claim that that $\cd(\phi)=n$ since $\cd(h_{*})=n.$

Let us pick a element $a\in H^{n}(B\Lambda;\Z\Lambda)$ with $h^{*}(a)\neq 0$. Suppose the contrary that $\cd(\phi)<n,$ then $f^{*}(a)=0.$ This is a contradiction, since $$0=i^{*}(f^{*}(a))=h^{*}(a)\neq 0.$$ Thus, we obtain $\cd(\phi)=n.$

{\em Step 2.} Let $B\Lambda$ be a pure infranilmanifold.
In view of Lemma ~\ref{DNE}, we consider $B\Gamma$ is also a pure infranilmanifold.

Since $\Lambda$ is virtually nilpotent group, there exists a nilpotent subgroup $\Lambda'$ such that the index $|\Lambda:\Lambda'|$ is finite. Therefore, the induce and the co-induced modules coinside, i.e., $$Ind_{\Lambda'}^{\Lambda} M= Coind_{\Lambda'}^{\Lambda} M$$ where $M$ is a $Z\Lambda'$-module. 
By the Shapiro Lemma ~\cite{Br}[proposition 6.2, p. 73], we have the following isomorphisms
$$H^{*}(B\Lambda';\Z\Lambda')\cong H^{*}(B\Lambda; Coind_{\Lambda'}^{\Lambda} \Lambda')=H^{*}(B\Lambda; Ind_{\Lambda'}^{\Lambda}\Z\Lambda')=H^{*}(B\Lambda;\Z\Lambda),$$
since $Ind_{\Lambda'}^{\Lambda}\Z\Lambda'=\Z\Lambda\otimes_{\Z\Lambda'}\Z\Lambda'\cong \Z\Lambda.$

Let $$\alpha:\Coind^{\Lambda}_{\Lambda'} \Z\Lambda'=Hom_{\Lambda'}(\Z\Lambda,\Z\Lambda')\to \Z\Lambda'$$ denote the canonical $\Z\Lambda'$-homomorphism, defined for $g:\Z\Lambda\to \Z\Lambda'$ as $\alpha(g)=g(1)$.

We define the coefficient homomorphism $\beta$ as composition of coefficient homomorphisms $$\beta:\Z\Lambda\cong\Z\Lambda\otimes_{\Z\Lambda'}\Z\Lambda'\cong Hom_{\Z\Lambda'}(\Z\Lambda,\Z\Lambda')\overset{\alpha}{\to}\Z\Lambda'.$$ One can define $\beta$ explicitly: Given $\gamma\in \Lambda,$ $\beta(\gamma)=\gamma$ if $\gamma\in \Lambda'$ and zero otherwise. 

 We get the following commutative diagram 

$$
\begin{tikzcd}
 H^{*}(B\Lambda;\Z\Lambda) \arrow[r,"j^{*}"] \arrow[d, "f^{*}"]  & H^{*}(B\Lambda';\Z\Lambda) \arrow[d,"f'^{*}"] \arrow[r, "\beta_{*}"] & H^{*}(B\Lambda';\Z\Lambda') \arrow[d, "f'^{*}"]  \\
H^{*}(B\Gamma;\Z\Lambda) \arrow[r, "j^{*}"] & H^{*}(B\Gamma';\Z\Lambda) \arrow[r, "\beta_{*}"]  & H^{*}(f^{*}(B\Lambda');\Z\Lambda')  \\
\end{tikzcd}
$$
where $f^{*}(B\Lambda')$ is pull-back of maps $f:B\Gamma\to B\Lambda$ and $i:B\Lambda'\to B\Lambda.$

Since the map $f:B\Gamma\to B\Lambda$ induces an epimorphism of the fundamental groups of manifolds, the pull-back space $f^{*}(B\Lambda')$ is path-connected. Furthermore, $f^{*}(B\Lambda')$ is an infranilmanifold since $B\Lambda'\to B\Lambda$ is a regular covering. Hence, we get that $$f'^{*}:H^{n}(B\Lambda';\Z\Lambda')\to H^{n}(f^{*}(B\Lambda');\Z\Lambda')$$ is nonzero homomorphism by {\em Step 1}, so $\cd(f')=n.$ 

We claim $$f^{*}:H^{n}(B\Lambda;\Z\Lambda)\to H^{n}(B\Gamma;\Z\Lambda)$$ is a nonzero homomorphism. Suppose the contrary and let us pick the element $b\in H^{n}(B\Lambda';\Z\Lambda')$ with $f'^{*}(b)\neq 0.$
Since $$\beta_{*}j^{*}: H^{*}(B\Lambda;\Z\Lambda)\to H^{*}(B\Lambda';\Z\Lambda')$$ is an isomorphism by the Shapiro Lemma, there exists a nonzero element $a\in H^{n}(B\Lambda;\Z\Lambda)$ such that $\beta_{*}j^{*}(a)=b.$ Then $f'^{*}j^{*}(a)=j^{*}f^{*}(a)=j^{*}(0)=0$ since $\cd(f)<n.$  This is a contradiction, as we have the commutative diagram
$$0\neq f'^{*}(b)=f'^{*}\beta_{*}j^{*}(a)=\beta_{*}f'^{*}j^{*}(b)=\beta_{*}j^{*}f^{*}(b)=\beta_{*}j^{*}(0)=0.$$ 
Hence, we prove the claim, which implies $\cd(\phi)=n.$ 
\end{proof}

\section{Acknowledgement} 
The author thanks his advisor Alexander Dranishnikov for extremely helpful discussions and his support.

\footnotesize

\end{document}